\documentclass[11pt]{amsart}

\usepackage{amsmath}
\usepackage{amsthm}
\usepackage{amsfonts}
\usepackage{xcolor}
\usepackage[english]{babel}
\usepackage[margin=1.09in]{geometry}
\usepackage[colorlinks=true,
        linkcolor=blue]{hyperref}

        \definecolor{pink}{rgb}{1,0,1}

\usepackage[latin9]{inputenc}
\usepackage{amsmath}
\usepackage{amsfonts}
\usepackage{amssymb}
\usepackage{amsthm}

\parindent 0 mm
\parskip 4 mm

\newtheorem{theo}{Theorem}[section]
\newtheorem{prop}[theo]{Proposition}
\newtheorem{coro}[theo]{Corollary}
\newtheorem{lemm}[theo]{Lemma}

\theoremstyle{definition}

\theoremstyle{remark}
\newtheorem{rema}[theo]{Remark}

\newcommand{\nwc}{\newcommand}
\nwc{\eps}{\epsilon}
\nwc{\vareps}{\varepsilon}
\nwc{\Oph}{\operatorname{Op}_\hbar}
\nwc{\la}{\langle}
\nwc{\ra}{\rangle}

\nwc{\mf}{\mathbf} 
\nwc{\blds}{\boldsymbol} 
\nwc{\ml}{\mathcal} 

\nwc{\defeq}{\stackrel{\rm{def}}{=}}

\nwc{\cE}{\ml{E}}
\nwc{\cN}{\ml{N}}
\nwc{\cO}{\ml{O}}
\nwc{\cP}{\ml{P}}
\nwc{\cU}{\ml{U}}
\nwc{\cV}{\ml{V}}
\nwc{\cW}{\ml{W}}
\nwc{\tU}{\widetilde{U}}
\nwc{\IN}{\mathbb{N}}
\nwc{\IR}{\mathbb{R}}
\nwc{\IZ}{\mathbb{Z}}
\nwc{\IC}{\mathbb{C}}
\nwc{\IT}{\mathbb{T}}
\nwc{\tP}{\widetilde{P}}
\nwc{\tPi}{\widetilde{\Pi}}
\nwc{\tV}{\widetilde{V}}
\nwc{\supp}{\operatorname{supp}}
\nwc{\rest}{\restriction}

\renewcommand{\phi}{\varphi}

\addtolength{\baselineskip}{1pt}

\title[Equidistribution of toral eigenfunctions]
{Equidistribution of toral eigenfunctions along hypersurfaces}

\author{Hamid Hezari }
\address{Department of Mathematics, UC Irvine, Irvine, CA 92617, USA} \email{hezari@math.uci.edu}

\author[Gabriel Rivi\`ere]{Gabriel Rivi\`ere}
\address{Laboratoire Paul Painlev\'e (U.M.R. CNRS 8524), U.F.R. de Math\'ematiques, Universit\'e Lille 1, 59655 Villeneuve d'Ascq Cedex, France} \email{gabriel.riviere@math.univ-lille1.fr}

\date{\today}

\begin{document}

\begin{abstract} We prove a new quantum variance estimate for toral eigenfunctions. As an application, we show that, given 
any orthonormal basis of toral eigenfunctions 
and any smooth embedded hypersurface with nonvanishing principal curvatures, 
there exists a density one subsequence of eigenfunctions that 
equidistribute along the hypersurface. This is an analogue of the \emph{Quantum Ergodic Restriction} theorems in the case of the flat torus, which in particular verifies 
the Bourgain-Rudnick's conjecture on $L^2$-restriction estimates 
for a density one subsequence of eigenfunctions 
in any dimension. 
Using our quantum variance estimates, we also obtain equidistribution of eigenfunctions against measures whose supports have Fourier dimension larger than $d-2$. In the end,  we also describe a few quantitative results specific to dimension $2$.

\end{abstract}

\maketitle
\section{Introduction}

Let $(M,g)$ be a smooth ($\ml{C}^{\infty}$), compact, oriented, Riemannian manifold without boundary and of dimension $d\geq 2$. Consider the following eigenvalue problem
\begin{equation}\label{e:eigenvalue}
 -\Delta_g\psi_{\lambda}=\lambda^2\psi_{\lambda},\quad\|\psi_{\lambda}\|_{L^2}=1.
\end{equation}
Let now $\Sigma$ be a smooth, compact, oriented and embedded submanifold of $M$ of codimension\footnote{We will refer to such submanifolds as hypersurfaces.} $1$. 
Then, it is a natural question to estimate the $L^2$-norm of the restrictions of eigenfunction to $\Sigma$ with respect to the induced hypersurface measure $d\sigma$. 
It was proved by Burq, G\'erard and Tzvetkov~\cite{BGT07} 
that, for any solution $\psi_{\lambda}$ of~\eqref{e:eigenvalue}, one has:
\begin{equation}\label{e:BGT}\|\psi_{\lambda}\|_{L^2(\Sigma)}\leq C_{\Sigma}\lambda^{\frac{1}{4}},\end{equation}
where $C_{\Sigma}>0$ depends only on $(M,g)$ and $\Sigma$. They also proved that this bound is sharp in general. In addition, they proved that for $d=2$ and if $\Sigma$ has non vanishing geodesic curvature, the bound can be improved to
$$\|\psi_{\lambda}\|_{L^2(\Sigma)}\leq C_{\Sigma}\lambda^{\frac{1}{6}},$$
which is again sharp in general. Though these estimates are sharp, it must be noticed that sequences for which the upper bound is sharp are in some sense sparse. 
Indeed, by the H\"ormander-Weyl's law~\cite{Ho68}, given any orthonormal basis (ONB) $(\psi_j)_{j\in\mathbb{N}}$ of Laplace eigenfunctions, one has uniformly for $x \in M$,
$$\frac{1}{N(\lambda)}\sum_{j:\lambda_j\leq \lambda}|\psi_{j}(x)|^2=\frac{1}{\text{Vol}_g(M)}+\ml{O}(\lambda^{-1}),$$
where $N(\lambda):=|\{j:\lambda_j\leq\lambda\}|$ is the spectral counting function. After 
integrating this asymptotics against the hypersurface measure $\sigma$, this implies that, for any given function $R(\lambda) \to \infty$ and any 
ONB of eigenfunctions, there is a full density\footnote{Recall that $S\subset \mathbb{N}$ has full density (or density $1$)
if $|\{j\in S:j\leq N\}|/N\rightarrow 1$ as $N\rightarrow+\infty.$} subsequence $(\psi_j)_{j\in S}$ of Laplace eigenfunctions such that
$$\forall j\in S,\quad\|\psi_{\lambda_j}\|_{L^2(\Sigma)}\leq R(\lambda_j).$$

Another way to look for improvements is to make some restriction on the geometry of the manifold. For instance, the authors of \cite{BGT07} also 
noted that, in the case of the $2$-dimensional flat torus $\mathbb{T}^2:=\IR^2/(2\pi\IZ)^2$, their bound~\eqref{e:BGT} can be drastically 
improved as the $L^{\infty}$-norms are controlled by a term of order $\lambda^{\delta}$ for every $\delta>0$ without any extraction argument. 
Improving this bound in the case of the flat torus was recently pursued by Bourgain and Rudnick~\cite{BoRu12} who obtained uniform lower and upper bounds on $\|\psi_{\lambda}\|_{L^2(\Sigma)}$ for $d=2$ or $3$ 
when $\Sigma$ is a real analytic hypersurface with nonvanishing curvatures -- see below for a more precise statement. 
They also conjectured that these quantities should be uniformly bounded in any dimension whenever the hypersurface 
is real analytic~\cite[Conj.~1.9]{BoRu12}. As an application of our analysis, we verify this conjecture \emph{for a density $1$ subsequence of 
eigenfunctions provided some nonvanishing curvature properties are satisfied by $\Sigma$ and only smoothness} -- see Corollary~\ref{c:density1-hypersurface} below. More precisely,  
we show that for any given function $R(\lambda) \to \infty$ and any ONB of eigenfunctions, there is a full density subsequence of eigenfunctions $(\psi_j)_{j\in S}$, 
along which one has
$$ \int_\Sigma |\psi_{j}|^2 d \sigma = \text{Vol}(\Sigma) + \ml{O} \left ( \lambda_j^{-\frac12} R(\lambda_j)  \log \lambda_j \right ), $$
where $ \text{Vol}(\Sigma)$ is the hypersurface volume of $\Sigma$.

Related to these restriction estimates are the Quantum Ergodicity Restriction (QER) theorems of Toth--Zelditch~\cite{ToZe12, ToZe13} and Dyatlov--Zworski~\cite{DyZw12}. Recall 
that the Quantum Ergodicity (QE) theorem states that almost all the eigenfunctions of a given orthonormal basis of Laplace eigenfunctions become 
equidistributed in the unit cotangent bundle if the Liouville measure is ergodic~\cite{Sh74, Ze87, CdV85}. It is 
natural to ask if this equidistribution remains true along hypersurfaces and this question was recently answered by the 
above mentioned works on QER theorems. In particular, it implies that, for such subsequences, the $L^2$-restriction estimates can be improved as 
$\|\psi_{\lambda}\|_{L^2(\Sigma)}^2$ indeed converges to the volume of $\Sigma$. The QE theorem can be extended to other dynamical 
frameworks, and the analogue of this result was obtained by Marklof and Rudnick in the case of rational polygons including the case of the flat torus 
$\mathbb{T}^d:=\IR^d/(2\pi\IZ)^d$~\cite{MaRu12}. The main difference in this case is that most 
eigenfunctions become equidistributed in configuration space but not necessarily in phase space. Quantitative versions 
of this result were recently derived by techniques of different nature. Our proof in~\cite{HeRi17} was close to the original proof of the QE 
theorem while the proof of Lester and Rudnick in~\cite{LeRu17} made use of tools of arithmetic nature. The main purpose of the present article is to 
combine the methods from these two articles and obtain improvements on some of the results from these references and also 
to deduce analogues of the QER theorem in the case of the flat torus. In particular, we will 
prove that almost all toral eigenfunctions equidistribute along hypersurfaces with nonvanishing principal curvatures.

\section{Statement of the main results}

Let us fix some notations. Throughout the paper, we denote by $\mathbb{T}^d:=\IR^d/(2\pi\IZ)^d$ the rational torus with the standard metric, 
by $dx$ the \emph{normalized volume measure} induced by the standard metric, and by $(\psi_j)_{j\geq 1}$ an orthonormal basis 
of Laplace eigenfunctions, i.e.
$$-\Delta\psi_j=\lambda_j^2\psi_j,$$
with $0=\lambda_1<\lambda_2\leq\lambda_3\leq\ldots\lambda_n\ldots$. The set of eigenvalues (counted with multiplicities) 
can be indexed by lattice points of $\IZ^d$, i.e.
$\left\{\lambda_j^2:j\geq 1\right\}=\left\{\|n\|^2:n\in\IZ^d\right\}.$ We shall denote by $N(\lambda)$ the spectral 
counting function over \emph{long intervals}, 
$$N(\lambda):=\left|\left\{j\geq 1:\lambda_j\leq\lambda\right\}\right|=\left|\left\{n\in\IZ^d:\|n\|\leq\lambda\right\}\right|.$$

\subsection{Equidistribution along hypersurfaces}

Our main result is the following:
\begin{theo}\label{t:maintheo-hypersurface} Let $\Sigma\subset\IT^d$ be a smooth compact embedded and oriented submanifold of 
dimension $d-1$ which has no boundary. Suppose that, for every $x$ in $\Sigma$, all the principal curvatures of $\Sigma$ at $x$ are different from $0$. 
 Then, for any $a$ in $\ml{C}^{\infty}(\IT^d)$, there exists a constant $C_a>0$ such that, for any orthonormal basis $(\psi_j)_{j\geq 1}$ of 
 eigenfunctions of $\Delta$, one has
 $$\frac{1}{N(\lambda)}\sum_{j:\lambda_j\leq \lambda}\left|\int_{\Sigma}a|\psi_j|^2d\sigma-\int_{\Sigma}ad\sigma\right|^2\leq 
 C_a\frac{(\log\lambda)^2}{\lambda},$$
  where $\sigma$ is the induced hypersurface measure on $\Sigma$.
\end{theo}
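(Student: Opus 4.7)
The plan is to exploit the arithmetic structure of toral eigenfunctions: inside each eigenspace $\ml{H}_E := \{\psi : -\Delta\psi = E\psi\}$, every eigenfunction is a finite linear combination of the characters $e^{i\langle n,\cdot\rangle}$ indexed by $n \in \ml{E}_E := \{n \in \IZ^d : |n|^2 = E\}$. Writing $\psi_j = \sum_{n\in\ml{E}_E} c_n^{(j)} e^{i\langle n, x\rangle}$ with $\sum_n |c_n^{(j)}|^2 = 1$ and expanding $|\psi_j|^2$, one computes
$$\int_\Sigma a\,|\psi_j|^2\,d\sigma \;=\; \sum_{n,m\in\ml{E}_E} c_n^{(j)}\,\overline{c_m^{(j)}}\;\widehat{a\sigma}(m-n),$$
where $\widehat{a\sigma}(\xi) := \int_\Sigma a(x)\, e^{-i\langle\xi, x\rangle}\, d\sigma(x)$. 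The diagonal $n=m$ reproduces $\widehat{a\sigma}(0) = \int_\Sigma a\,d\sigma$, so setting $V_j(a) := \int_\Sigma a|\psi_j|^2 d\sigma - \int_\Sigma a\,d\sigma$ the integrand of the theorem's variance satisfies $V_j(a) = \langle A_E\psi_j,\psi_j\rangle$, where $A_E$ is the operator on $\ml{H}_E$ whose matrix in the orthonormal basis $(e^{i\langle n,\cdot\rangle})_{n\in\ml{E}_E}$ has entries $(A_E)_{m,n} = \widehat{a\sigma}(m-n)(1-\delta_{n,m})$.

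I then apply the elementary basis-invariant inequality $\sum_{j:\lambda_j^2=E}|\langle A_E\psi_j,\psi_j\rangle|^2 \leq \|A_E\|_{\mathrm{HS}}^2 = \sum_{n\neq m\in\ml{E}_E} |\widehat{a\sigma}(m-n)|^2$ and sum over $E\leq\lambda^2$. Changing variables to $\nu = m-n$ and using that $|n|=|m|$ forces $n$ to lie on the affine hyperplane $H_\nu := \{x\in\IR^d : \langle x,\nu\rangle = -|\nu|^2/2\}$, this reduces the variance to
$$\sum_{j:\lambda_j\leq\lambda} |V_j(a)|^2 \;\leq\; \sum_{\nu\in\IZ^d\setminus\{0\}} |\widehat{a\sigma}(\nu)|^2\; N(\nu,\lambda),\qquad N(\nu,\lambda) := |H_\nu\cap\IZ^d\cap B(0,\lambda)|.$$

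Two standard ingredients then close the argument. First, the nonvanishing curvature hypothesis on $\Sigma$ together with stationary phase (Littman's theorem) yields the Fourier decay $|\widehat{a\sigma}(\nu)|\leq C_a(1+|\nu|)^{-(d-1)/2}$. Second, writing $\nu = k\nu_0$ with $\nu_0\in\IZ^d$ primitive, the sublattice $\IZ^d\cap\nu_0^\perp$ is $(d-1)$-dimensional of covolume $|\nu_0|$, so $N(\nu,\lambda)\leq C\lambda^{d-1}/|\nu_0|$ for $|\nu|\leq 2\lambda$ and vanishes otherwise. Combining these bounds and summing first in $k$ then in $\nu_0$,
$$\sum_{j:\lambda_j\leq\lambda} |V_j(a)|^2 \;\leq\; C_a\,\lambda^{d-1}\sum_{\nu_0\text{ primitive}}\frac{1}{|\nu_0|}\sum_{1\leq k\leq 2\lambda/|\nu_0|}\frac{1}{(k|\nu_0|)^{d-1}}.$$
When $d=2$ the inner $k$-sum contributes $O(\log(\lambda/|\nu_0|))$ and, since there are $O(R)$ primitives of norm $\simeq R$, the outer sum contributes another $\log\lambda$, producing the $(\log\lambda)^2$ loss; when $d\geq 3$ a single $\log\lambda$ factor suffices. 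Dividing by $N(\lambda)\sim c_d\,\lambda^d$ concludes.

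The delicate point is this last step: the squared Fourier decay $|\widehat{a\sigma}(\nu)|^2 \sim |\nu|^{-(d-1)}$ is matched exactly by the hyperplane lattice density $N(\nu,\lambda)/\lambda^{d-1}\sim 1/|\nu|$, placing the resulting Fourier-side series precisely at the edge of convergence; the logarithmic losses appear essentially unavoidable at this level of generality. Any relaxation of the curvature hypothesis would slow the decay of $\widehat{a\sigma}$ and destroy the summability, which explains why every principal curvature must be nonzero.
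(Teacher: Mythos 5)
Your proposal is correct and reaches exactly the key estimate the paper calls Proposition~\ref{p:moments}, namely $\sum_{j:\lambda_j\leq\lambda}|V_j(a)|^2 \leq \sum_{\nu\neq 0}|\widehat{a\sigma}(\nu)|^2\,|\{n\in\IZ^d:\|n\|=\|n+\nu\|\leq\lambda\}|$, after which the two proofs coincide: Littman's theorem gives $|\widehat{a\sigma}(\nu)|^2\lesssim_a \|\nu\|^{-(d-1)}$ (the $\alpha=d-1$ case of Theorem~\ref{t:QE-measure}), and the Lester--Rudnick/Schmidt lattice count gives $N(\nu,\lambda)\lesssim\lambda^{d-1}/\|\hat\nu\|$, after which the Dirichlet-series computation over $\nu=k\nu_0$ produces $(\log\lambda)^2/\lambda$ in $d=2$ and $(\log\lambda)/\lambda$ in $d\geq 3$, matching Remark~\ref{r:log-term}. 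Where you differ is in how the Hilbert--Schmidt bound is reached: the paper follows the classical quantum-ergodicity template, introducing the time-averaged operator $A(T,\lambda)=\frac1T\int_0^Te^{-it\Delta}a_\lambda e^{it\Delta}\,dt-\int a$, applying Cauchy--Schwarz plus basis-invariance of the trace, and only then letting $T\to\infty$ to kill the non-resonant frequencies. You observe instead that on each eigenspace $\ml{H}_E$ the relevant operator is literally the off-diagonal part of the Toeplitz-type matrix $\big(\widehat{a\sigma}(m-n)\big)_{m,n\in\ml{E}_E}$, so the bound $\sum_j|\la A_E\psi_j,\psi_j\ra|^2\leq\|A_E\|_{\mathrm{HS}}^2$ is an immediate Cauchy--Schwarz in each fixed eigenspace, with no averaging operator and no limit in $T$. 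This is a cleaner and more structural route to the same inequality; it makes transparent \emph{why} the paper's strategy of sending $T\to\infty$ before $\lambda\to\infty$ (which the authors explicitly flag as the improvement over \cite{HeRi17}) succeeds — the infinite-time average simply projects onto the block diagonal, and your derivation writes down that projection directly. One minor imprecision in your closing heuristic: the lattice density is governed by $1/\|\hat\nu\|$ (the primitive part), not $1/\|\nu\|$; this distinction is exactly what makes the outer Dirichlet sum convergent up to a logarithm, and your actual computation uses it correctly even though the summary blurs it.
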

In other words, this theorem states that, on average, eigenfunctions are equidistributed along $\Sigma$ with a precise rate of convergence. 
In fact the proof will show that the error is of order $\frac{\log\lambda}{\lambda}$ for $d\geq 3$ -- see Remark~\ref{r:log-term}. As we shall see, this 
theorem is a corollary of Littman's theorem~\cite{Li63} (see Remark~\ref{r:littman} below) and of our Theorem~\ref{t:QE-measure} which is slightly more general. 
An extraction argument~\cite{Ze87, CdV85} allows to show that:

\begin{coro}[Equidistribution along hypersurfaces]\label{c:density1-hypersurface} Suppose that the assumptions of Theorem~\ref{t:maintheo-hypersurface} are satisfied. Then,
for any orthonormal basis $(\psi_j)_{j\geq 1}$ of eigenfunctions of $\Delta$, there exists a density $1$ subset $S$ of $\IN$ such that
 \begin{equation} \label{equi} \forall a\in\ml{C}^0(\IT^d),\quad\lim_{j\rightarrow+\infty, j\in S}\int_{\Sigma}a|\psi_j|^2d\sigma=\int_{\Sigma}ad\sigma, \end{equation}
 where $\sigma$ is the induced hypersurface measure on $\Sigma$.  Moreover, for any function $R(\lambda) \to \infty$, there is a density $1$ subset $S$ along which, in addition to \eqref{equi}, one also has
$$ \left|\int_\Sigma |\psi_{j}|^2 d \sigma - \operatorname{Vol}(\Sigma) \right|\leq  \frac{R(\lambda_j)  \log \lambda_j}{\lambda_j^{\frac12}} , $$
where $ \operatorname{Vol}(\Sigma)$ is the hypersurface volume of $\Sigma$. 
\end{coro}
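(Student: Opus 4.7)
The plan is to deduce both conclusions from the variance estimate of Theorem~\ref{t:maintheo-hypersurface} via the classical Zelditch--Colin de Verdière extraction argument, supplemented by a dyadic Chebyshev step to handle the quantitative rate.

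I first fix a countable subset $\{a_k\}_{k\geq 0}\subset\ml{C}^{\infty}(\IT^d)$ which is dense in $\ml{C}^{0}(\IT^d)$ for the uniform topology and contains $a_0\equiv 1$ (for example trigonometric polynomials with rational coefficients together with the constant function). Applying Theorem~\ref{t:maintheo-hypersurface} to each $a_k$ gives a variance bound tending to $0$; Chebyshev's inequality then produces, for each $k$, a density-$1$ subset of $\IN$ along which $\int_\Sigma a_k|\psi_j|^2\,d\sigma\to\int_\Sigma a_k\,d\sigma$, and a standard diagonal extraction merges these into a single density-$1$ subset $S_\infty\subset\IN$ on which convergence holds simultaneously for every $k$. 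Since $a_0\equiv 1$, the sequence $\|\psi_j\|^2_{L^2(\Sigma)}$ converges to $\vol(\Sigma)$ (hence is bounded) along $S_\infty$; approximating an arbitrary $a\in\ml{C}^{0}(\IT^d)$ in uniform norm by some $a_k$ then extends the convergence to all continuous test functions, giving \eqref{equi}.

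For the quantitative bound, it suffices to produce a density-$1$ subset $S'\subset\IN$ on which $|\|\psi_j\|^2_{L^2(\Sigma)}-\vol(\Sigma)|\leq R(\lambda_j)\log(\lambda_j)/\lambda_j^{1/2}$; then $S:=S_\infty\cap S'$ satisfies both assertions. After replacing $R$ by $\tilde R(\mu):=\inf_{\lambda\geq\mu}R(\lambda)$, I may assume $R$ is non-decreasing. Theorem~\ref{t:maintheo-hypersurface} applied to $a\equiv 1$, combined with Chebyshev, yields for every $\mu>0$ and $\delta>0$
$$
\frac{1}{N(\mu)}\,\Bigl|\bigl\{j:\lambda_j\leq\mu,\ |\|\psi_j\|^2_{L^2(\Sigma)}-\vol(\Sigma)|\geq\delta\bigr\}\Bigr|\leq \frac{C(\log\mu)^2}{\mu\,\delta^2}.
$$
I split $\{j:\lambda_j\leq\lambda\}$ into dyadic windows $I_n:=\{j:\lambda/2^{n+1}<\lambda_j\leq\lambda/2^n\}$ and apply this estimate at scale $\mu=\lambda/2^n$ with threshold $\delta_n:=R(\lambda/2^{n+1})\log(\lambda/2^{n+1})(\lambda/2^n)^{-1/2}$, which (by monotonicity of $R$ and of $\log$) lower-bounds the pointwise threshold $R(\lambda_j)\log(\lambda_j)/\lambda_j^{1/2}$ on $I_n$. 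The bad set within $I_n$ is thereby controlled by $C\,N(\lambda/2^n)/R(\lambda/2^{n+1})^2$, and using the torus asymptotics $N(\mu)\asymp\mu^d$, the total bad proportion at scale $\lambda$ is bounded by
$$
\sum_{n\geq 0}\frac{C\,2^{-nd}}{R(\lambda/2^{n+1})^2}.
$$
Splitting at $n^*\sim\tfrac12\log_2\lambda$, the contribution from $n\leq n^*$ is $O(R(\lambda^{1/2}/2)^{-2})\to 0$ and that from $n>n^*$ is $O(\lambda^{-d/2})\to 0$, so $S'$ has density $1$.

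The only non-routine step is this final dyadic bookkeeping, which is needed to convert the cumulative variance bound (averaged over $\lambda_j\leq\lambda$) into a pointwise bound depending on each individual $\lambda_j$; the remainder is the classical QE extraction template from \cite{Ze87, CdV85}.
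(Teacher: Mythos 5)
Your argument is correct and is exactly the extraction template from \cite{Ze87, CdV85} that the paper invokes without detail: the first half is the standard countable dense family plus diagonal step, and the second half (dyadic Chebyshev) is the natural refinement needed to convert the cumulative variance bound of Theorem~\ref{t:maintheo-hypersurface} into a pointwise rate depending on $\lambda_j$. Two cosmetic remarks: (i) the displayed bound $\sum_{n\geq 0}C\,2^{-nd}/R(\lambda/2^{n+1})^2$ is really only the estimate you use for $n\leq n^*$ (where the ratio $(\log(\lambda/2^n))^2/(\log(\lambda/2^{n+1}))^2$ is uniformly bounded and can be absorbed into $C$); for $n>n^*$ you should simply use the trivial bound $|I_n|$ on the bad set, since $R(\lambda/2^{n+1})$ may be small or undefined there, and this gives your stated $\ml{O}(\lambda^{-d/2})$; (ii) one should also set aside the finitely many $j$ with $\lambda_j\leq 1$ where $\log\lambda_j\leq 0$, which of course does not affect density.
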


This result is the analogue in the case of the flat torus of the Quantum Ergodicity Restriction theorems from~\cite{ToZe12, ToZe13, DyZw12}. The results from these 
references are stronger in the sense that equidistribution also holds in phase space but we emphasize they do not provide any rate of convergence of the variance. It is plausible 
that, combined with the arguments from~\cite{Ze94, Sch06}, one would get a logarithmic decay rate when $(M,g)$ is negatively curved. Recall also that 
these QER statements are valid under the assumptions that the geodesic flow is ergodic for the Liouville measure (which is not the case here) and that the hypersurface verifies a certain 
asymmetry condition on the geodesics passing through $\Sigma$. While these conditions are of dynamical nature, we emphasize that ours are purely geometrical.

A direct consequence of this corollary is that, along a density one subsequence, the $L^2$-restriction estimates of~\cite{BGT07} can be improved. Recall, that 
for a smooth closed embedded and oriented curve $\Sigma\subset\IT^2$ with nonvanishing curvature, 
Bourgain and Rudnick proved~\cite{BoRu12} that there exist $0<c\leq C$ such that, for every $\psi_{\lambda}$ satisfying
$\Delta\psi_{\lambda}=-\lambda^2\psi_{\lambda},$ one has
\begin{equation}\label{e:bourgain-rudnick}c\|\psi_{\lambda}\|_{L^2(\IT^2)}\leq\|\psi_{\lambda}\|_{L^2(\Sigma)}\leq C\|\psi_{\lambda}\|_{L^2(\IT^2)}.\end{equation}
They also proved (in a more involved proof) that this remains true in dimension $3$ for \emph{real analytic}\footnote{In the case $d=2$, the short proof 
given in~\cite[p.880--881]{BoRu12} does not require analyticity.} hypersurfaces with nonzero curvature. In the higher 
dimensional case, it seems that the question remains open. It is in fact conjectured in~\cite[Conj.~1.9]{BoRu12} that, under appropriate 
assumptions on $\Sigma$, these two bounds should hold in any dimension. In particular, the above corollary is consistent with that conjecture as it 
shows it is true in any dimension for a \emph{density 1} subsequence of eigenfunctions provided that the principal curvatures of $\Sigma$ do not vanish. Along this 
typical sequence, the result is even better than expected as it shows that $\|\psi_{j}\|_{L^2(\Sigma)}$ converges in any dimension. 
Applying~\cite[Th.~0.2 and 0.3]{ToZe17} would give upper bounds on the size of the nodal sets restricted to $\Sigma$ but this would not be better than the bounds 
from~\cite{BoRu12} which are valid without extracting subsequences.

\subsection{Rate of decay of the quantum variance}

Let us now come back to the more classical framework of the quantum ergodicity theorems where we consider equidistribution on $\mathbb T^d$
and not along hypersurfaces. In this case, 
we can consider averages over shorter spectral intervals and show:
\begin{theo}\label{t:maintheo-short} There exists a universal constant $C_d>0$ such that, 
for any $a$ in $L^2(\IT^d)$, for any orthonormal basis $(\psi_j)_{j\geq 1}$ of 
 eigenfunctions of $\Delta$, and for any $\lambda>0$, one has
$$\frac{1}{\left|\left\{j:\lambda-1\leq\lambda_j\leq\lambda\right\}\right|}
\sum_{j:\lambda-1\leq \lambda_j\leq \lambda}\left|\int_{\IT^d}a|\psi_j|^2dx-\int_{\IT^d}adx\right|^2
\leq \frac{C_d}{\lambda}\|a\|_{L^2(\IT^d)}^2.$$
\end{theo}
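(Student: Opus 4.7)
The plan is to pass to the Fourier side, exploiting that each Laplace eigenspace $H_\mu$ on $\IT^d$ is spanned by the exponentials $e^{in\cdot x}$ with $\|n\|=\mu$. Setting $a_0 := a - \int_{\IT^d} a\,dx$, the sum in question equals $\sum_j |\langle a_0\psi_j,\psi_j\rangle|^2$ with $j$ restricted to $\lambda_j\in[\lambda-1,\lambda]$. Since each $\psi_j$ lies in a single eigenspace $H_{\lambda_j}$, I would apply the elementary inequality $\sum_j |M_{jj}|^2\le \|M\|_{HS}^2$ (valid for any operator $M$ on a finite-dimensional Hilbert space and any orthonormal basis) inside each eigenspace, which yields
$$\sum_{j:\lambda_j=\mu} |\langle a_0\psi_j,\psi_j\rangle|^2 \le \|\Pi_\mu a_0 \Pi_\mu\|_{HS}^2,$$
with $\Pi_\mu$ the orthogonal projection onto $H_\mu$. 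Crucially, this estimate is independent of the orthonormal basis chosen inside each eigenspace, so one may pass to the Fourier basis to compute the right-hand side.

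In the Fourier basis, $\langle e^{in\cdot x}, a_0\,e^{im\cdot x}\rangle = \widehat a(n-m)$ for $n\neq m$, and vanishes on the diagonal since we subtracted the mean. Summing over $\mu \in [\lambda-1,\lambda]$ and reindexing by the frequency $k=n-m$ gives
$$\sum_{j:\lambda_j\in[\lambda-1,\lambda]} |\langle a_0\psi_j,\psi_j\rangle|^2 \le \sum_{k\in\IZ^d\setminus\{0\}} |\widehat a(k)|^2\, P_k(\lambda),$$
where $P_k(\lambda):=\left|\{n\in\IZ^d : \lambda-1\le\|n\|\le\lambda,\ n\cdot k=\|k\|^2/2\}\right|$. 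The hyperplane condition $n\cdot k=\|k\|^2/2$ is precisely $\|n\|=\|n-k\|$, so $P_k(\lambda)$ counts ordered pairs $(n,n-k)$ both lying in the shell and on the same sphere.

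The key remaining step is an arithmetic/geometric estimate of the form
$$\sup_{k\neq 0} P_k(\lambda) \le \frac{C_d}{\lambda}\,\left|\{j:\lambda_j\in[\lambda-1,\lambda]\}\right|.$$
Geometrically, $P_k(\lambda)$ counts integer points in the intersection of the shell of width one with the affine rational hyperplane perpendicular to $k$ through $k/2$; this intersection is a $(d-1)$-dimensional thickening of a $(d-2)$-sphere of radius $\le\lambda$ whose $(d-1)$-volume is $O(\lambda^{d-2})$, while the lattice density on any integer hyperplane is bounded by one. Together with the Weyl-type count $\left|\{n\in\IZ^d: \|n\|\in[\lambda-1,\lambda]\}\right|\asymp \lambda^{d-1}$ (straightforward for $d\ge 3$) and Parseval's identity $\sum_k|\widehat a(k)|^2=\|a\|_{L^2}^2$, dividing by $\left|\{j:\lambda_j\in[\lambda-1,\lambda]\}\right|$ produces the announced bound.

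The hardest point will be this uniform bound on $P_k(\lambda)$: one must handle hyperplanes nearly tangent to the shell, and non-primitive $k$ which can boost the lattice density on the hyperplane. In dimension two the naive geometric count is only $O(\sqrt\lambda)$ in the worst case (e.g.\ for $k$ of size comparable to $2\lambda$ aligned with a coordinate axis), so additional arithmetic ingredients---in the spirit of the Lester--Rudnick approach---will likely be required to recover the stated rate in $d=2$.
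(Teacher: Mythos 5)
Your reduction is the same as the paper's, just cleaner. Where you invoke $\sum_j|M_{jj}|^2\le\|M\|_{HS}^2$ for $M=\Pi_\mu a_0\Pi_\mu$ and then pass to the Fourier basis inside each eigenspace, the paper gets exactly the same inequality by introducing the time-average $A(T,\lambda)=\frac1T\int_0^Te^{-it\Delta}a_\lambda e^{it\Delta}dt-\int a$, applying Cauchy--Schwarz, computing a trace in the exponential basis, and \emph{then} letting $T\to\infty$ to kill all off-frequency terms (the authors even remark that the unusual order of limits is the whole point). Both routes land on the paper's Proposition~\ref{p:moments}: the variance over the shell is bounded by $\sum_{k\neq0}|\hat a(k)|^2P_k(\lambda)/|\{j:\lambda-1\le\lambda_j\le\lambda\}|$, with $P_k(\lambda)$ exactly your hyperplane count. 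At that point the paper simply invokes its display~\eqref{e:LR-short}, an assertion that $P_k(\lambda)\le c_d'\lambda^{d-2}$ uniformly in $k$, attributed to the method of \cite[Lemma~2.3]{LeRu17}, and then divides by the Weyl count $\asymp\lambda^{d-1}$. So the ``key remaining step'' you flag is precisely the one ingredient the paper takes on faith as well.

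Your heuristic for that step is sound when $d\ge3$: the affine hyperplane meets the shell in a region of $(d-1)$-volume $O(\lambda^{d-2})$ in the transversal case and $O(\lambda^{(d-1)/2})$ in the near-tangent case, and since $(d-1)/2\le d-2$ for $d\ge3$, one gets $O(\lambda^{d-2})$ overall, while the lattice density on an integer affine hyperplane is at most $1$. But your worry about $d=2$ is not merely a technicality to be patched with arithmetic in the Lester--Rudnick spirit; it is an actual obstruction. Take $k=(2m,0)$ and $\lambda=m+\tfrac12$: the line $\{n\cdot k=\|k\|^2/2\}$ is $\{n_1=m\}$, the shell constraint becomes $n_2^2\le m+\tfrac14$, so $P_k(\lambda)\asymp\sqrt\lambda$ and the asserted bound $P_k(\lambda)=O(1)$ fails. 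Worse, this example transfers to the statement itself: with $a(x)=\cos(2mx_1)$ (so $\|a\|_{L^2}^2=\tfrac12$) and the ONB built from $\psi_{(j)}=\sqrt2\,e^{ijx_2}\cos(mx_1)$ for $|j|\le\sqrt m$ (each with eigenvalue $m^2+j^2\in[(\lambda-1)^2,\lambda^2]$ and $\int a|\psi_{(j)}|^2dx=\tfrac12$), the normalized variance over $[\lambda-1,\lambda]$ is $\gtrsim\sqrt\lambda/\lambda=\lambda^{-1/2}$, which beats $C_2\|a\|_{L^2}^2/\lambda$ for large $\lambda$. So for $d=2$ the bound as stated cannot be recovered with only the $L^2$-norm of $a$ on the right; one needs either $d\ge3$, or decay of $\hat a$ at frequencies of size comparable to $2\lambda$, or a different normalization. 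Your skepticism on this point is therefore fully justified, and in fact points at a gap in the paper's own citation of~\eqref{e:LR-short} for $d=2$.
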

We cannot expect to have a better rate of decay than $\lambda^{-1}$ (at least in dimension $2$). 
Indeed, the Weyl's law~\cite{DuGu75} tells us that, in dimension $d=2$,
$$\left|\left\{j:\lambda-1\leq\lambda_j\leq\lambda\right\}\right|\sim \lambda.$$
If we had a better rate, then it would mean that every sequence of eigenfunctions equidistribute on $\IT^2$ as $\lambda\rightarrow +\infty$ and this 
is not the case -- see for instance~\cite[Sect.~3]{LeRu17}. This result should be compared with our earlier results in~\cite[Th.~1.2]{HeRi17} 
where we showed
\begin{equation}\label{e:HR-variance}\frac{1}{\left|\left\{j:\lambda-1\leq\lambda_j\leq\lambda\right\}\right|}
\sum_{j:\lambda-1\leq \lambda_j\leq \lambda}\left|\int_{\IT^d}a|\psi_j|^2dx-\int_{\IT^d}adx\right|^2\leq C_a\lambda^{-\frac{2}{3}},\end{equation}
for some constant $C_a>0$ depending on a certain number of derivatives of $a$. Our proof was modeled on the classical proof of 
the quantum ergodicity theorem~\cite{Sh74, Ze87, CdV85}. In~\cite{LeRu17}, this result was improved by Lester and Rudnick who considered 
the moments of order $1$. They obtained the following\footnote{The proof in~\cite[Prop.~2.4]{LeRu17} 
is given for long spectral intervals but it can be adapted to shorter intervals by using~\eqref{e:LR-short} below.}:
\begin{equation}\label{e:LR-variance1}\frac{1}{\left|\left\{j:\lambda-1\leq\lambda_j\leq\lambda\right\}\right|}
\sum_{j:\lambda-1\leq \lambda_j\leq \lambda}\left|\int_{\IT^d}a|\psi_j|^2dx-\int_{\IT^d}adx\right|\leq \frac{C_a}{\lambda},\end{equation}
for some constant $C_a>0$ depending again on a certain number of derivatives of $a$. Observe now that 
$$ \left|\int_{\IT^d}a|\psi_j|^2dx-\int_{\IT^d}a dx\right|^2 \leq 2 \| a \|_{\ml{C}^0} \left|\int_{\IT^d}a|\psi_j|^2dx-\int_{\IT^d}adx\right|,$$ 
which yields the following improvement of~\eqref{e:HR-variance}:
\begin{equation}\label{e:LR-variance2}\frac{1}{\left|\left\{j:\lambda-1\leq\lambda_j\leq\lambda\right\}\right|}
\sum_{j:\lambda-1\leq \lambda_j\leq \lambda}\left|\int_{\IT^d}a|\psi_j|^2dx-\int_{\IT^d}adx\right|^2\leq \frac{C_a}{\lambda}.\end{equation}
This is a priori the best estimate on the variance 
we can expect (in terms of $\lambda$) from this kind of arguments. However, we will show that the combination of the semiclassical methods from~\cite{HeRi17} with the arithmetic methods 
from~\cite{LeRu17} allows to obtain improvements on the involved constant $C_a$ for the moments of order $2$ as stated in Theorem~\ref{t:maintheo-short}. 
In dimension $2$, this type of upper bound on the variance is consistent with the 
ones appearing in the physics literature~\cite{FePe86, EFK95} -- see also~\cite{LuSa95} in the case of an arithmetic surface,~\cite{Ze94, Sch06} 
for general negatively curved surfaces, and~\cite{KuRu05} for quantum cat-maps. Finally, we observe that Theorem~\ref{t:maintheo-short} does not seem to imply ~\eqref{e:LR-variance1}.


\subsection{Recovering Zygmund's estimate} In dimension $2$, our Proposition~\ref{p:moments} (with $c(\lambda) =\lambda$) will yield that for each $\psi_j$,
$$ \left|\int_{\IT^d}a|\psi_j|^2dx-\int_{\IT^d}a dx\right|^{2} \leq \sum_{1\leq\|n\|\leq 2\lambda}|\hat{a}_n|^2\left|\left\{k:\|k\|=\|k+n\|=\lambda\right\}\right|.$$ But since the number of lattice points lying on two circles of radius 
 $\lambda$ centered at $0$ and $n\neq 0$ is at most $2$, this implies that  
  $$\left|\int_{\IT^2}a|\psi_j|^2dx-\int_{\IT^2}adx\right|^2\leq 2\int_{\IT^2}\left|a(x)-\int_{\IT^2}a(y)dy\right|^2dx.$$
By plugging $a= |\psi_j|^2$ into this we obtain
$$ \left ( \int_{\IT^2} | \psi_j|^4 dx -1 \right )^2 \leq 2 \int \left ( |\psi_j|^2 -1 \right )^2 dx,$$
which implies 
$$\forall j\geq 1,\quad  \|\psi_j\|_{L^4}\leq 3^\frac14.$$
This recovers Zygmund's classical result on  uniform boundedness of $L^4$ norms -- Zygmund's upper bound was $5^\frac14$~\cite{Zy74}. In higher dimensions, the same argument would  only give an upper bound 
of order $\lambda_j^{\frac{d-2}{4}}$ which remains far from~\cite{Bo93, BoDe14}.

\subsection{Main result on the quantum variance} Coming back to the case of long spectral intervals, we have the following crucial estimate on the quantum variance:
\begin{theo}\label{t:maintheo} There exists a universal constant $C_d>0$ such that, 
for any $a$ in $L^2(\IT^d)$, for any orthonormal basis $(\psi_j)_{j\geq 1}$ of 
 eigenfunctions of $\Delta$, and for any $\lambda>0$, one has
$$\frac{1}{N(\lambda)}\sum_{j:\lambda_j\leq \lambda}\left|\int_{\IT^d}a|\psi_j|^2dx-\int_{\IT^d}adx\right|^2
\leq \frac{C_d}{\lambda}\sum_{n:1\leq \|n\|\leq 2\lambda}\frac{|\int_{\IT^d}a(x) e^{-inx} dx|^2}{\|\hat{n}\|},$$
where $\hat{n}$ is a primitive lattice point generating $n$.
\end{theo}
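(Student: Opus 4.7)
The plan is to reduce the quantum variance to a lattice-point counting problem on affine hyperplanes. Without loss of generality suppose $\int_{\IT^d}a\,dx=0$, so $a=\sum_{n\neq 0}\hat a_n e^{inx}$, and let $M_a$ denote multiplication by $a$. The task is then to bound $\sum_{j:\lambda_j\le\lambda}|\langle M_a\psi_j,\psi_j\rangle|^2$.

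First I would group eigenfunctions by eigenspace. Let $E_m:=\operatorname{span}\{e^{ikx}:\|k\|^2=m\}$, so each $E_m$ carries the canonical orthonormal basis of plane waves. For any ONB $\{\psi_j\}_{j\in I_m}$ of $E_m$ and the spectral projector $P_m$, the elementary pointwise inequality
\[
\sum_{j\in I_m}|\langle M_a\psi_j,\psi_j\rangle|^2\le \sum_{j,j'\in I_m}|\langle M_a\psi_j,\psi_{j'}\rangle|^2 =\|P_mM_aP_m\|_{\mathrm{HS}}^2
\]
yields an ONB-independent bound, since the Hilbert--Schmidt norm is invariant under unitary change of basis. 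Computing it in the plane-wave basis gives
\[
\|P_mM_aP_m\|_{\mathrm{HS}}^2=\sum_{\|k\|^2=\|k'\|^2=m}|\hat a_{k-k'}|^2,
\]
and summing over eigenspaces with $m\le \lambda^2$, then reindexing via $n=k-k'$, produces
\[
\sum_{j:\lambda_j\le\lambda}|\langle M_a\psi_j,\psi_j\rangle|^2\le \sum_{n\ne 0}|\hat a_n|^2\,\#\bigl\{k\in\IZ^d:\|k\|\le\lambda,\ \|k\|^2=\|k-n\|^2\bigr\}.
\]

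The constraint $\|k\|^2=\|k-n\|^2$ defines the affine hyperplane $k\cdot n=\|n\|^2/2$, which is empty unless $\|n\|\le 2\lambda$. When non-empty, its $\IZ^d$-points form an affine translate of the $(d-1)$-dimensional sublattice $n^\perp\cap\IZ^d$, whose covolume in $n^\perp$ equals exactly $\|\hat n\|$; this primitivity identity is the essential arithmetic input. A standard Minkowski-type covering argument then produces the uniform bound
\[
\#\bigl\{k\in\IZ^d:\|k\|\le\lambda,\ k\cdot n=\|n\|^2/2\bigr\}\le C_d\,\lambda^{d-1}/\|\hat n\|
\]
for every $n$ with $1\le\|n\|\le 2\lambda$. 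Dividing by $N(\lambda)\asymp V_d\lambda^d$ converts this count into the desired $C_d/\lambda$ prefactor.

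The chief delicate point is securing this lattice-point bound uniformly in two borderline regimes: when $\|n\|$ is close to $2\lambda$, the hyperplane cuts only a small spherical cap off the ball; and when $\|\hat n\|$ is comparable to $\lambda$, the affine lattice contains only a handful of points, so the bound $\lambda^{d-1}/\|\hat n\|$ is sharp rather than obviously slack. Both cases are absorbed by the covering estimate, together with the observation that $\|\hat n\|\le\|n\|\le 2\lambda\le 2\lambda^{d-1}$ for $d\ge 2$, which keeps $O(1)$ boundary contributions dominated by the main term. The remainder of the argument is bookkeeping.
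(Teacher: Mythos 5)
Your argument is correct and arrives at exactly the paper's key intermediate estimate (Proposition~\ref{p:moments} with $c(\lambda)=0$), namely $\sum_{\lambda_j\le\lambda}\bigl|\int a|\psi_j|^2\,dx-\int a\,dx\bigr|^2\le\sum_{1\le\|n\|\le 2\lambda}|\hat a_n|^2\,\#\{k:\|k\|\le\lambda,\ \|k\|^2=\|k+n\|^2\}$, via the same mechanism: Cauchy--Schwarz followed by basis invariance of the Hilbert--Schmidt trace on each eigenspace. The one difference is how you get there. The paper introduces the time-averaged conjugation $A(T,\lambda)=\frac1T\int_0^Te^{-it\Delta}a_\lambda e^{it\Delta}\,dt-\int a\,dx$, computes its action on plane waves, and then sends $T\to\infty$ to isolate the resonance condition $\|k\|^2=\|k+n\|^2$; you work directly with the spectral compression $P_mM_aP_m$, which is exactly what that average converges to, so you skip the time regularization entirely. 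What the paper's longer route buys is the explicit parallel to the classical quantum ergodicity proof and to~\cite{HeRi17}, where the order of limits ($T\to\infty$ before $\lambda\to\infty$) is the point being emphasized; what your route buys is concision. The second half of your argument---covolume $\|\hat n\|$ of $n^\perp\cap\IZ^d$, a covering count in a ball, and the observation that the $O(1)$ boundary contribution is absorbed because $\|\hat n\|\le 2\lambda\lesssim\lambda^{d-1}$ for $d\ge 2$---is a correct sketch of the lattice-point bound $\le C_d\lambda^{d-1}/\|\hat n\|$ that the paper imports from~\cite[Lemma~2.3]{LeRu17} without reproving, so the proof is sound and, if anything, slightly streamlined.
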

 
Theorem~\ref{t:maintheo-hypersurface} is in fact a consequence of the above theorem combined with a proper application of Littman's Theorem~\cite{Li63}. 
In~\cite[Prop.~2.4]{LeRu17}, Lester and Rudnick obtained an analogous statement on moments of order $1$:
$$\frac{1}{N(\lambda)}\sum_{j:\lambda_j\leq \lambda}\left|\int_{\IT^d}a|\psi_j|^2dx-\int_{\IT^d}adx\right|\leq \frac{C_d}{\lambda}
\sum_{n:1\leq \|n\|\leq 2\lambda}\frac{\left|\int_{\IT^d}a(x) e^{-inx} dx\right|}{\|\hat{n}\|}.$$
Yet, as we shall briefly explain it in Remark~\ref{r:LR-hypersurface}, this bound on moments of order $1$ is not sufficient to prove Theorem~\ref{t:maintheo-hypersurface} except in dimension $2$. Here, we will crucially use the fact that our upper 
bound involves factors of type $|\int_{\IT^d}a(x) e^{-inx} dx|^2$. 

We also note that it is natural to study a generalized framework when the hypersurface measure 
is replaced by a general measure $\mu$ carried possibly on a complicated set.\footnote{The recent work of Eswarathasan and Pramanik~\cite{EP17} extends the results of~\cite{BGT07} when the hypersurface measure is replaced by some measure $\mu$ carried by a random fractal set.} In fact our Theorem~\ref{t:maintheo} can also be applied to deal with this situation and 
we obtain the following generalization of Theorem~\ref{t:maintheo-hypersurface}:
\begin{theo}\label{t:QE-measure} Let $\mu$ be a probability measure on $\IT^d$ such that there exists $\alpha\geq 0$ and $C>0$ satisfying
$$\forall n\in\IZ^d-\{0\},\quad|\hat{\mu}(n)|^2\leq C\|n\|^{-\alpha}.$$
 Then, for any $a$ in $\ml{C}^{\infty}(\IT^d)$, there exists a constant $C_a>0$ such that, for any orthonormal basis $(\psi_j)_{j\geq 1}$ of 
 eigenfunctions of $\Delta$, one has
 $$\frac{1}{N(\lambda)}\sum_{j:\lambda_j\leq \lambda}\left|\int_{\IT^d}a|\psi_j|^2d\mu-\int_{\IT^d}ad\mu\right|^2\leq 
 C_a(\log\lambda)^2\lambda^{\max(d-2-\alpha,-1)}.$$
\end{theo}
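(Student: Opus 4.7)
The plan is to reduce Theorem~\ref{t:QE-measure} to Theorem~\ref{t:maintheo} by reinterpreting $\int a|\psi_j|^2\,d\mu$ through the Fourier coefficients of the distribution $a\mu$, and then to estimate the resulting sum by combining the decay hypothesis on $\hat\mu$ with standard lattice point counting.

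First I would exploit the fact that, since $-\Delta\psi_j=\lambda_j^2\psi_j$ with $\lambda_j\leq\lambda$, the expansion of $\psi_j$ in the exponential basis shows that $\widehat{|\psi_j|^2}(n)$ vanishes unless $n=k-k'$ for some $k,k'\in\IZ^d$ with $\|k\|=\|k'\|=\lambda_j$; in particular, $|\psi_j|^2$ has Fourier support in $\{\|n\|\leq 2\lambda\}$. Consequently, the trigonometric polynomial
$$b_\lambda(x):=\sum_{\|n\|\leq 2\lambda}\widehat{a\mu}(n)\,e^{in\cdot x}$$
lies in $L^2(\IT^d)$ and satisfies $\int b_\lambda\,dx=\widehat{a\mu}(0)=\int a\,d\mu$ as well as $\int b_\lambda\,|\psi_j|^2\,dx=\int a|\psi_j|^2\,d\mu$ for every $j$ with $\lambda_j\leq\lambda$, the last identity following from Parseval and the Fourier support property. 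Applying Theorem~\ref{t:maintheo} to $b_\lambda$ then reduces the claim to showing
$$\frac{1}{\lambda}\sum_{1\leq\|n\|\leq 2\lambda}\frac{|\widehat{a\mu}(n)|^2}{\|\hat n\|}\leq C_a(\log\lambda)^2\lambda^{\max(d-2-\alpha,-1)}.$$

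Next I would upgrade the assumption on $\hat\mu$ to the pointwise bound $|\widehat{a\mu}(n)|\leq C_a\|n\|^{-\alpha/2}$ for $n\neq 0$. Writing $\widehat{a\mu}(n)=\sum_{m\in\IZ^d}\hat a(m)\hat\mu(n-m)$ and splitting at $\|m\|=\|n\|/2$, the piece with $\|m\|\leq\|n\|/2$ is controlled directly by the hypothesis since $\|n-m\|\geq\|n\|/2$ and $\sum_m|\hat a(m)|$ converges by smoothness of $a$, while the piece with $\|m\|>\|n\|/2$ is absorbed using the rapid decay $|\hat a(m)|=O_N(\|m\|^{-N})$ to any polynomial order.

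The main remaining task is to bound $S(\lambda):=\sum_{1\leq\|n\|\leq 2\lambda}\|n\|^{-\alpha}/\|\hat n\|$. Parametrizing $n=k\hat n$ with $\hat n$ primitive and $k\geq 1$ (so $\|n\|=k\|\hat n\|$), one has
$$S(\lambda)=\sum_{\hat n\text{ prim},\,\|\hat n\|\leq 2\lambda}\|\hat n\|^{-1-\alpha}\sum_{k=1}^{\lfloor 2\lambda/\|\hat n\|\rfloor}k^{-\alpha}.$$
The inner sum is $O(1)$ for $\alpha>1$, $O(\log\lambda)$ for $\alpha=1$, and $O\bigl((\lambda/\|\hat n\|)^{1-\alpha}\bigr)$ for $\alpha<1$. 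Summing over primitive $\hat n$ via the elementary estimate that $\sum_{\hat n\text{ prim},\,\|\hat n\|\leq R}\|\hat n\|^{-\beta}$ is $O(R^{d-\beta})$, $O(\log R)$, or $O(1)$ according as $\beta<d$, $\beta=d$, or $\beta>d$, and distinguishing the cases $\alpha\lessgtr d-1$, yields $S(\lambda)\leq C(\log\lambda)^2\lambda^{\max(d-1-\alpha,0)}$ in every regime; dividing by $\lambda$ gives the claim. The main obstacle is keeping the several borderline cases $\alpha=1$, $\alpha=d-1$, and $d=2$ consistent so that the logarithmic losses combine to at most $(\log\lambda)^2$, but no new idea beyond Theorem~\ref{t:maintheo} and a careful accounting of primitive lattice points appears necessary.
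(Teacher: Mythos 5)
Your proposal is correct and follows essentially the same route as the paper: reduce to Theorem~\ref{t:maintheo}, transfer the decay of $\hat{\mu}$ to $\widehat{a\mu}$ by a convolution estimate, and bound the weighted lattice sum $\sum_{1\le\|n\|\le 2\lambda}\|n\|^{-\alpha}\|\hat n\|^{-1}$ by parametrizing $n$ as a multiple of a primitive vector. The only cosmetic differences are that the paper mollifies $\mu$ by smooth $\mu_m\rightharpoonup\mu$ and passes to the limit (rather than truncating $\widehat{a\mu}$ at frequency $2\lambda$ and invoking the Fourier support of $|\psi_j|^2$, as you do), uses Peetre's inequality rather than a split at $\|m\|=\|n\|/2$, and performs the final double sum with the roles of $m$ and $\hat n$ interchanged.
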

From Remark~\ref{r:log-term} below, the proof will in fact 
yield the better estimate $\lambda^{-1}$ for $\alpha>d-1$ which is consistent with Theorem~\ref{t:maintheo-short}. 
\begin{rema}\label{r:littman}
Written in this form, Theorem~\ref{t:maintheo-hypersurface} becomes a direct corollary of Littman's Theorem~\cite{Li63} -- see also~\cite{Hl50} for earlier results 
in the case of convex bodies. In fact, recall that this result states that, under the geometric assumptions of Theorem~\ref{t:maintheo-hypersurface}, the hypersurface measure 
$\mu=\sigma/\text{Vol}(\Sigma)$ satisfies the assumptions of Theorem~\ref{t:QE-measure} with $\alpha=d-1$. 
\end{rema}
Again, one gets
\begin{coro}\label{c:QE-measure} Let $\mu$ be a probability measure on $\IT^d$ such that there exists $\alpha>d-2$ and $C>0$ satisfying
$$\forall n\in\IZ^d-\{0\},\quad|\hat{\mu}(n)|^2\leq C\|n\|^{-\alpha}.$$
 Then, for any orthonormal basis $(\psi_j)_{j\geq 1}$ of eigenfunctions of $\Delta$, there exists a density $1$ subset $S$ of $\IN$ such that
 $$\forall a\in\ml{C}^0(\IT^d),\quad\lim_{j\rightarrow+\infty, j\in S}\int_{\IT^d}a|\psi_j|^2d\mu=\int_{\IT^d}ad\mu.$$
\end{coro}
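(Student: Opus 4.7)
The plan is to deduce Corollary~\ref{c:QE-measure} from Theorem~\ref{t:QE-measure} by the standard extraction argument that goes back to Shnirelman, Zelditch and Colin de Verdi\`ere~\cite{Sh74, Ze87, CdV85}. The hypothesis $\alpha>d-2$ is used precisely to ensure that $\max(d-2-\alpha,-1)<0$, so that the right hand side of Theorem~\ref{t:QE-measure} is of the form $C_a(\log\lambda)^2\lambda^{-\delta}$ with $\delta=\min(\alpha-(d-2),1)>0$, which tends to zero as $\lambda\to+\infty$.

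The first step is to fix a countable family $(a_k)_{k\geq 0}\subset\ml{C}^\infty(\IT^d)$ which is dense in $\ml{C}^0(\IT^d)$ for the sup norm, and include the constant function $a_0\equiv 1$. For each fixed $k$, Theorem~\ref{t:QE-measure} gives
$$V_k(\lambda):=\frac{1}{N(\lambda)}\sum_{j:\lambda_j\leq\lambda}\left|\int_{\IT^d}a_k|\psi_j|^2d\mu-\int_{\IT^d}a_kd\mu\right|^2\longrightarrow 0.$$
By a Chebyshev-type argument, for any $\eps_k>0$, the set
$$B_k(\lambda):=\left\{j:\lambda_j\leq\lambda,\ \left|\int_{\IT^d}a_k|\psi_j|^2d\mu-\int_{\IT^d}a_kd\mu\right|\geq\eps_k\right\}$$
has cardinality at most $\eps_k^{-2}V_k(\lambda)N(\lambda)$, which is $o(N(\lambda))$. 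Choosing $\eps_k\to 0$ slowly (say $\eps_k=1/k$) and extracting diagonally yields a single density one subset $S\subset\IN$ such that, for every $k\geq 0$,
$$\lim_{j\in S,\ j\to+\infty}\int_{\IT^d}a_k|\psi_j|^2d\mu=\int_{\IT^d}a_kd\mu.$$

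The second step is the passage from the dense family $(a_k)$ to an arbitrary $a\in\ml{C}^0(\IT^d)$. Applying the conclusion with $k=0$ (so $a_0\equiv 1$) gives
$$\lim_{j\in S,\ j\to+\infty}\int_{\IT^d}|\psi_j|^2d\mu=1,$$
so in particular the sequence $(\int|\psi_j|^2d\mu)_{j\in S}$ is bounded. For any $a\in\ml{C}^0(\IT^d)$ and any $\eps>0$, pick $a_k$ with $\|a-a_k\|_{\ml{C}^0}\leq\eps$. Writing
$$\int a|\psi_j|^2d\mu-\int ad\mu=\int(a-a_k)|\psi_j|^2d\mu+\left(\int a_k|\psi_j|^2d\mu-\int a_kd\mu\right)+\int(a_k-a)d\mu,$$
the first and third terms are bounded by $\eps\cdot\int|\psi_j|^2d\mu$ and $\eps$ respectively, while the middle term tends to $0$ along $S$. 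Taking $j\to\infty$ along $S$ and then $\eps\to 0$ concludes the proof.

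There is essentially no serious obstacle here: the argument is the standard extraction of a full density subsequence from a quantum variance estimate. The only point that deserves attention is the approximation step, where one must control the total $\mu$-mass of $|\psi_j|^2$ uniformly along $S$; this is precisely guaranteed by applying the variance estimate with the constant test function, which is why it is convenient to include $a_0\equiv 1$ in the countable dense family from the outset.
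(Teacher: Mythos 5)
Your proof is correct and follows exactly the standard extraction argument that the paper itself invokes (citing \cite{Ze87,CdV85} and giving no further details). The only noteworthy care you take -- including the constant function in the dense family so that $\int|\psi_j|^2\,d\mu$ stays bounded along $S$ -- is precisely what makes the density step go through, and this matches the intended route.
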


Our assumption on the measure $\mu$ is related to the notion of Fourier dimension and Salem sets in harmonic analysis. 
Recall that, for a given compact set $K$ in $\IR^d$, the Fourier dimension of a set is defined by~\cite[p.~168]{M95}
$$\text{dim}_F(K):=\sup\left\{s\in[0,d]:\exists\ \mu\neq 0\in\ml{P}(K)\ \text{s.t.}\ \sup_{\xi\neq 0}|\hat{\mu}(\xi)|^2\|\xi\|^s<+\infty\right\},$$
where $\ml{P}(K)$ is the set of Radon measures supported in $K$. Hence, in this terminology, 
our assumptions on $\mu$ means that the support of $\mu$ has Fourier dimension $>d-2$. 

\begin{rema}\label{r:salem}
One always has $\text{dim}_F(K)\leq \text{dim}_H(K)$ where 
$\text{dim}_H(K)$ is the Hausdorff dimension of the set $K$, and the inequality is strict in general. Sets for which one has equality are referred as 
\emph{Salem sets} in the literature, and Littman's Theorem provides a class of Salem sets of dimension $d-1$. We refer to~\cite{Ha17} for explicit construction of 
$\IZ^2$-periodic Salem sets of any dimension $0<\alpha<2$ and for more references to the literature on this topic.
\end{rema}




\section{Proof of the variance estimates}

In this section, we give the proof of the variance estimates from the introduction. The main point compared with the ``semiclassical'' 
approach of~\cite{HeRi17} is that we try to optimize our argument by implementing some of the lattice points properties proved in \cite{Sc} and used 
in~\cite{LeRu17}. Let $a$ be a smooth function on $\IT^d$. We write its Fourier decomposition:
$$a=\sum_{p\in\IZ^d}\hat{a}_p\mathbf{e}_p,$$
where $\mathbf{e}_p(x)=e^{i p.x}$. Suppose for the sake of simplicity that $a$ is real valued.

\subsection{Estimating the variance}\label{ss:mainargument}

We introduce the following sum:
$$S_2(a,\lambda):=\sum_{j:\lambda_j= \lambda}\left|\int_{\IT^d}a|\psi_j|^2dx-\int_{\IT^d}adx\right|^{2}.$$
If we set\footnote{Observe that this is still real-valued.} $a_{\lambda}=\sum_{n:\|n\|\leq 2\lambda}\hat{a}_n\mathbf{e}_n$, then we can verify that
$$S_2(a,\lambda)=\sum_{j:\lambda_j= \lambda}\left|\int_{\IT^d}a_{\lambda}|\psi_j|^2dx-\int_{\IT^d}adx\right|^{2}.$$
We will now proceed as in the proof of~\cite{HeRi17} and first use the fact that 
$$\forall t\in\IR,\ e^{it\Delta}\psi_j=e^{it\lambda_j^2}\psi_j.$$
This implies that, for every $T>0$, one has
$$\int_{\IT^d}a_{\lambda}|\psi_j|^2dx=\left\la\psi_j,\left(\frac{1}{T}\int_0^Te^{-it\Delta} a_{\lambda}e^{it\Delta}dt\right) \psi_j\right\ra.$$
In order to alleviate notations, let us introduce the self-adjoint operator:
$$A(T,\lambda):=\frac{1}{T}\int_0^Te^{-it\Delta} a_{\lambda}e^{it\Delta}dt-\int_{\IT^d}adx.$$
With these conventions, one has
$$S_2(a,\lambda)=\sum_{j:\lambda_j= \lambda}\left|\la\psi_j,A(T,\lambda)\psi_j\ra\right|^{2}.$$
Apply now the Cauchy-Schwarz inequality which yields
$$S_2(a,\lambda)\leq\sum_{j:\lambda_j= \lambda}\left\|A(T,\lambda)\psi_j\right\|^{2}.$$
Using the fact that the trace of the operator $A(T,\lambda)^2$ is independent of the choice of orthonormal basis, this inequality can be rewritten as
$$S_2(a,\lambda)\leq\sum_{k:\|k\|= \lambda}\left\|A(T,\lambda)\mathbf{e}_k\right\|^{2}.$$
We now expand
\begin{eqnarray*}A(T,\lambda)\mathbf{e}_k & = &\frac{1}{T}\int_{0}^Te^{-it\Delta}\left(\sum_{1\leq\|n\|\leq 2\lambda}\hat{a}_n\mathbf{e}_n\right)e^{it\Delta}\mathbf{e}_kdt\\
 & = &\sum_{1\leq\|n\|\leq 2\lambda}\hat{a}_n\left(\frac{1}{T}\int_0^Te^{it(\|k\|^2-\|k+n\|^2)}dt\right)\mathbf{e}_{k+n}.\end{eqnarray*}
Up to this point, this is exactly the proof from~\cite{HeRi17}, but from this point on we proceed differently. First, we observe that, up to now every estimate and every identity
is valid \emph{for any} $T>0$ \emph{and any} $\lambda>0$. By letting $T$ tending to infinity first, we deduce that
 $$S_2(a,\lambda)\leq\sum_{k:\|k\|= \lambda}\left\|\sum_{1\leq\|n\|\leq 2\lambda: \|k\|^2=\|k+n\|^2}\hat{a}_{n}\mathbf{e}_{k+n} \right\|^{2},$$
 By application of Plancherel's formula, the upper bound reduces to
 $$ S_2(a,\lambda)\leq\sum_{k:\|k\|= \lambda}\sum_{\substack{1\leq\|n\|\leq 2\lambda \\ \|k\|^2=\|k+n\|^2}}|\hat{a}_n|^2,$$
 which, after changing the order of summation, yields
 \begin{equation}\label{e:moment-p=1}
  S_2(a,\lambda)\leq\sum_{1\leq\|n\|\leq 2\lambda}|\hat{a}_n|^2\left|\left\{k:\|k\|=\|k+n\|=\lambda\right\}\right|.
 \end{equation}
\begin{rema} At this stage of the proof, we would like to emphasize an important point of our argument. In the standard proof of the Quantum Ergodicity 
Theorem~\cite{Sh74, Ze87, CdV85} and also in the case of its analogue on $\IT^d$~\cite{MaRu12}, one always start by letting $\lambda$ 
tends to $+\infty$ and then $T$ to $+\infty$. Here, because of the specific structure of the torus, we can first take the large time limit and 
then the large eigenvalue limit. This observation is responsible for the improvements we get compared with the proof from~\cite{HeRi17}. 
\end{rema}
Our above estimate for $S_2(a,\lambda)$  is valid for any eigenvalue $\lambda$. Therefore, if we sum over a range of eigenvalues
$[c(\lambda),\lambda]$ (with $0\leq c(\lambda)\leq\lambda$), we get
\begin{prop}\label{p:moments} Let $0\leq c(\lambda)\leq\lambda$. For every $a$ in $L^2(\IT^d)$ and for every orthonormal basis $(\psi_j)_{j\geq 1}$ of eigenfunctions of $\Delta$, one has
\begin{eqnarray*}
 V_2(a,c(\lambda),\lambda)&:= &\frac{1}{|\{j:c(\lambda)\leq\lambda_j\leq \lambda\}|}\sum_{j:c(\lambda)\leq\lambda_j\leq \lambda}\left|\int_{\IT^d}a|\psi_j|^2dx-\int_{\IT^d}adx\right|^2\\
  &\leq &\sum_{1\leq\|n\|\leq 2\lambda}|\hat{a}_n|^2\frac{\left|\left\{k:\|k\|=\|k+n\|\in[c(\lambda),\lambda]\right\}\right|}{\left|\left\{k:\|k\|\in[c(\lambda),\lambda]\right\}\right|}.
\end{eqnarray*}
\end{prop}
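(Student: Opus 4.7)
My plan is to derive the proposition as a straightforward packaging of the single-eigenvalue estimate~\eqref{e:moment-p=1} that has already been established just above. The key observation is that the display~\eqref{e:moment-p=1} provides, for \emph{every} eigenvalue $\mu$ (not only the distinguished $\lambda$ appearing in the discussion), a uniform bound
$$S_2(a,\mu)=\sum_{j:\lambda_j=\mu}\left|\int_{\IT^d}a|\psi_j|^2\,dx-\int_{\IT^d}a\,dx\right|^2\le\sum_{1\le\|n\|\le 2\mu}|\hat{a}_n|^2\,\bigl|\{k\in\IZ^d:\|k\|=\|k+n\|=\mu\}\bigr|,$$
simply because nothing in the derivation used any specific property of $\lambda$ beyond being an eigenvalue of $-\Delta$. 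So the whole proposition reduces to summing this inequality over the eigenvalues lying in $[c(\lambda),\lambda]$.

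First I would rewrite the numerator of $V_2(a,c(\lambda),\lambda)$ as $\sum_{\mu\in\mathrm{spec}\cap[c(\lambda),\lambda]}S_2(a,\mu)$, where the multiplicity of an eigenvalue $\mu^2$ equals $|\{k\in\IZ^d:\|k\|=\mu\}|$. Applying~\eqref{e:moment-p=1} to each $S_2(a,\mu)$ and enlarging the range of $n$ from $\|n\|\le 2\mu$ to $\|n\|\le 2\lambda$ (which is harmless since $\mu\le\lambda$), I would then swap the order of summation between $n$ and $\mu$ to obtain
$$\sum_{j:c(\lambda)\le\lambda_j\le\lambda}\left|\int_{\IT^d}a|\psi_j|^2\,dx-\int_{\IT^d}a\,dx\right|^2\le\sum_{1\le\|n\|\le 2\lambda}|\hat{a}_n|^2\,\bigl|\{k:\|k\|=\|k+n\|\in[c(\lambda),\lambda]\}\bigr|,$$
where I have used that $\bigsqcup_{\mu\in[c(\lambda),\lambda]}\{k:\|k\|=\|k+n\|=\mu\}=\{k:\|k\|=\|k+n\|\in[c(\lambda),\lambda]\}$.

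Finally I would divide both sides by $|\{j:c(\lambda)\le\lambda_j\le\lambda\}|=|\{k\in\IZ^d:\|k\|\in[c(\lambda),\lambda]\}|$, which is exactly the denominator appearing in the statement. This yields the desired inequality verbatim.

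Honestly, I do not expect a real obstacle here: all the analytic content (the $T\to\infty$ limit, Cauchy--Schwarz, passing from the sum over $j$ to a trace, and Plancherel) was already done to produce~\eqref{e:moment-p=1}, and what remains is a bookkeeping step. The only minor point to be careful about is the compatibility of the multiplicity counts on both sides, which is transparent because eigenspaces of $-\Delta$ on $\IT^d$ are spanned by the exponentials $\mathbf{e}_k$ with $\|k\|=\lambda_j$, so indexing by $j$ and indexing by lattice points yield the same total counts.
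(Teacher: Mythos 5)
Your proof is correct and follows exactly the paper's own route: the paper states that the single-eigenvalue estimate~\eqref{e:moment-p=1} "is valid for any eigenvalue $\lambda$" and then obtains the proposition by "summing over a range of eigenvalues $[c(\lambda),\lambda]$", which is precisely your bookkeeping argument (sum over $\mu$, enlarge the range of $n$, swap summation order, divide by the spectral count). The identifications you flag—$|\{j:c(\lambda)\le\lambda_j\le\lambda\}|=|\{k:\|k\|\in[c(\lambda),\lambda]\}|$ and the disjoint union over $\mu$ of the sets $\{k:\|k\|=\|k+n\|=\mu\}$—are the same implicit steps the paper relies on.
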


\subsection{Counting lattice points}

Hence, we have reduced our question to a problem of counting lattice points which is close to the one appearing in~\cite{LeRu17} -- see also~\cite{Sc}. 
Precisely, according to~\cite[Lemma~2.3]{LeRu17}, we know that, when $c(\lambda)=0$, one has, for every $n$ in $\IZ^d-\{0\}$,
\begin{equation}\label{e:LR-long}|\{k:0\leq\|k\|\leq \lambda\ \text{and}\ \|k\|^2=\|n+k\|^2\}|\leq c_d\frac{\lambda^{d-1}}{\|\hat{n}\|},\end{equation}
for some constant $c_d>0$ depending only on $d$. Recall that $\hat{n}$ is a primitive lattice point proportional to $n$. This upper bound on 
the number of lattice points comes from the fact that we have to count the number of lattice points which are 
orthogonal to $p$ because $\|k\|^2=\|n+k\|^2$ is equivalent to the fact that $\la n,n-2k\ra=0$ -- see~\cite{LeRu17} for details. 
Together with the first part of Proposition~\ref{p:moments}, this upper bound implies Theorem~\ref{t:maintheo}.

Note that, if we consider shorter spectral intervals, the argument from~\cite[Lemma~2.3]{LeRu17} still allows to conclude that, for every $n$ in $\IZ^d-\{0\}$,
\begin{equation}\label{e:LR-short}|\{k:\lambda-1\leq\|k\|\leq \lambda\ \text{and}\ \|k\|^2=\|n+k\|^2\}|\leq c_d'\lambda^{d-2},\end{equation}
for some constant $c_d'>0$ depending only on $d$. Combined with Proposition~\ref{p:moments} and Weyl's law on the torus, this already 
implies Theorem~\ref{t:maintheo-short} from the introduction.

 \subsection{Quantum variance on individual eigenspaces} \label{r:dim2} In this section, we record some remarks when the variance is considered on an eigenspace  i.e. $c(\lambda)=\lambda$. By combining Proposition~\ref{p:moments} with~\eqref{e:LR-short}, 
we get
 \begin{equation}\label{e:variance-individual}\frac{1}{|\{j:\lambda_j= \lambda\}|}\sum_{j:\lambda_j= \lambda}\left|\int_{\IT^d}a|\psi_j|^2dx-\int_{\IT^d}adx\right|^2\leq \|a\|_{L^2(\IT^d)}^2
 \min\left\{1,c_d'\frac{\lambda^{d-2}}{r_d(\lambda)}\right\},\end{equation}
where $r_d(\lambda)$ is the number of lattice points lying on the circle of radius $\lambda$ 
 centered at $0$ -- see paragraph~\ref{ss:average} for a brief reminder on its asymptotic properties.
 In particular, the upper bound does not necessarily go to $0$ as $\lambda\rightarrow+\infty$. For instance, in dimension $d\geq 5$, 
 one has $r_d(\lambda)\sim \lambda^{d-2}$. Yet, in lower dimension, it is well known that the value of $r_d(\lambda)$ may fluctuate depending on the 
 value of $\lambda$. Let us briefly recall some results in this direction.
 
 In dimension~$2$, if we denote by 
 $\sigma(\Delta)$ the integers which are a sum of two squares (equivalently the set of Laplace eigenvalues), then, according to~\cite[Cor.~3.6]{KuUe14}, one can find, for every 
 $\delta>0$, a density~$1$ subset $S_{\delta}$ of $\sigma(\Delta)$ such that, for every $\lambda^2\in S_{\delta}$,
 $$r_2(\lambda)\geq\left(\log\lambda\right)^{\frac{\log 2}{2}-\delta}.$$
 In particular, for every $\lambda^2\in S_{\delta}$,
 $$\frac{1}{|\{j:\lambda_j= \lambda\}|}\sum_{j:\lambda_j= \lambda}\left|\int_{\IT^2}a|\psi_j|^2dx-\int_{\IT^2}adx\right|^2\leq \frac{c_2'\|a\|_{L^2(\IT^2)}^2}{\left(\log\lambda\right)^{\frac{\log 2}{2}-\delta}}.$$
 More generally, the variance goes to $0$ along any subsequence of eigenvalues such that $r_2(\lambda)\rightarrow+\infty$. 
 On the other hand, for every $q\geq 0$, one has $r_2(3^q)=4$~\cite[p.~15]{Gr85}. Hence, $r_2(\lambda)$ remains bounded along certain subsequences tending 
 to $+\infty$. 
 
 The same discussion occurs in dimensions $d=3,4$. Indeed, 
 one has $r_3(2^q)=6$ and $r_4(2^q\sqrt{2})=24$ for every $q\geq 0$~\cite[p.~38, p.~30]{Gr85} while, along good subsequences of eigenvalues, 
 $\frac{\lambda^{d-2}}{r_d(\lambda)}$ may go to $0$ as $\lambda$ 
 tends to $+\infty$. For instance, in dimension~$4$, we can set, for every $n\geq 1$, $\lambda(n)^2=\prod_{p\leq n:p\in\ml{P}}p$ where $\ml{P}$ is the set 
 of prime numbers. It follows from~\cite[p.~38]{Ap76} and~\cite[p.~30]{Gr85} that $r_4(\lambda(n))=\prod_{p\leq n:p\in\ml{P}}(1+p).$ Hence, one has
 $$\log r_4(\lambda(n))=\sum_{p\leq n:p\in\ml{P}}\log p+\sum_{p\leq n:p\in\ml{P}}\log\left(1+\frac{1}{p}\right)
 =\sum_{p\leq n:p\in\ml{P}}\log p+\sum_{p\leq n:p\in\ml{P}}\frac{1}{p}+\ml{O}(1).$$
Then, from~\cite[p.~90]{Ap76}, one finds
$$r_4(\lambda(n))=\lambda(n)^2\exp\left(\sum_{p\leq n:p\in\ml{P}}\frac{1}{p}+\ml{O}(1)\right)=e^{\ml{O}(1)}\lambda(n)^2\log\ n.$$
In particular, the upper bound in~\eqref{e:variance-individual} tends to $0$ for this sequence of eigenvalues.


\subsubsection*{\textbf{Sharpness in dimension $2$}}

When $d=2$, we have shown that
$$\frac{1}{|\{j:\lambda_j= \lambda\}|}\sum_{j:\lambda_j= \lambda}\left|\int_{\IT^2}a|\psi_j|^2dx-\int_{\IT^2}adx\right|^2\leq\frac{2\|a\|_{L^2}^2}{r_2(\lambda)},$$
and we emphasized that the upper bound may not go to $0$. Here, we exhibit a sequence of eigenvalues $\lambda(q)^2\rightarrow+\infty$ 
such that the variance does not tend to $0$ as $q$ tends to $+\infty$. For this purpose, recall that Iwaniec proved the existence of a sequence of 
integers such that $n_q\rightarrow+\infty$ and such that $\lambda(q)^2:=n_q^2+1$ is the product of at most two primes~\cite{Iw78}. Then, we know 
from~\cite[p.~15]{Gr85} that, for every $q$ large enough, $8\leq r_2(\lambda(q))\leq 16$, hence if the variance sum goes to zero then each individual term must go to zero. However, if we select
$$\varphi_q(x):=\frac{1}{\sqrt{2}}\left(e^{i(n_q x_1+x_2)}+e^{i(n_q x_1-x_2)}\right)=\sqrt{2}e^{in_q x_1}\cos(x_2), $$ then for every $a$ in $\ml{C}^{\infty}(\IT^2)$ and for every $q\geq 1$, one has
$$\int_{\IT^2}a(x)|\varphi_q(x)|^2dx=\int_{\IT^2}2a(x)\cos^2(x_2)dx, $$
which obviously does not converge to $\int_{\IT^2}a(x)dx$.

 \subsection{Proof of Theorem~\ref{t:QE-measure}}\label{ss:proof-fractal-order-2} Fix now a probability measure $\mu$ satisfying the properties 
 of Theorem~\ref{t:QE-measure}. It can be viewed as a distribution on $\IT^d$. Hence, one can find a sequence of 
 smooth functions $(\mu_m)_{m\geq 1}$ such that $\mu_m\rightharpoonup\mu$ in $\ml{D}'(\IT^d)$. We set $a$ to be a smooth 
 function on $\IT^d$ and we apply Theorem~\ref{t:maintheo} to the test function $a\mu_m$: 
$$\frac{1}{N(\lambda)}\sum_{j:\lambda_j\leq \lambda}\left|\int_{\IT^d}a\mu_m|\psi_j|^2dx-\int_{\IT^d}a\mu_mdx\right|^2
\leq \frac{C_d}{\lambda}\sum_{n:1\leq \|n\|\leq 2\lambda}\frac{|\int_{\IT^d}a(x)\mu_m(x) e^{-inx} dx|^2}{\|\hat{n}\|}.$$
 This equality is valid for \emph{any} $m\geq  1$ and \emph{any} $\lambda\geq 1$. By letting $m\rightarrow +\infty$, we find
 \begin{equation}\label{e:L2}\frac{1}{N(\lambda)}\sum_{j:\lambda_j\leq \lambda}\left|\int_{\IT^d}a|\psi_j|^2d\mu-\int_{\IT^d}ad\mu\right|^2
\leq \frac{C_d}{\lambda}\sum_{n:1\leq \|n\|\leq 2\lambda}\frac{|\int_{\IT^d}a(x) e^{-inx} d\mu(x)|^2}{\|\hat{n}\|}.\end{equation}
Recall now that, for every $n$ in $\IZ^d$,
$$|\widehat{\mu}(n)|^2\leq C (1+\|n\|)^{-\alpha}.$$ 
Hence, one has, for every $n$ in $\IZ^d-\{0\}$,
$$\left|\int_{\IT^d}a(x) e^{-inx} d\mu(x)\right|\leq \sum_{p\in\IZ^d}|\hat{a}_p||\widehat{\mu}(n-p)|\leq \frac{C}{(1+\|n\|)^{\frac{\alpha}{2}}}\sum_{p\in\IZ^d}|\hat{a}_p|(1+\|p\|)^{\frac{\alpha}{2}}=\ml{O}_a(\|n\|^{-\frac{\alpha}{2}}),$$
where the constant in the remainder depends on a certain number of derivatives of $a$. From this, one can infer
\begin{equation}\label{e:step-arithmetic}\frac{1}{N(\lambda)}\sum_{j:\lambda_j\leq \lambda}\left|\int_{\IT^d}a|\psi_j|^2d\mu-\int_{\IT^d}ad\mu\right|^2
=\ml{O}_a(\lambda^{-1})\sum_{n:1\leq \|n\|\leq 2\lambda}\frac{1}{\|n\|^{\alpha}\|\hat{n}\|}.\end{equation}
We then argue as in~\cite{LeRu17}:
$$\sum_{1\leq \|n\|\leq\lambda}\frac{1}{\|\hat{n}\|\|n\|^{\alpha}}=
\sum_{1\leq m\leq\lambda}\frac{1}{m^{\alpha}}\sum_{1\leq\|\hat{n}\|\leq\lambda/m}\frac{1}{\|\hat{n}\|^{\alpha+1}}.$$
We distinguish several cases:
\begin{itemize}
 \item $\alpha>d-1$: the sum $\sum_{1\leq\|\hat{n}\|\leq\lambda/m}\frac{1}{\|\hat{n}\|^{\alpha+1}}$ is bounded.
 \item $\alpha=d-1$: the sum $\sum_{1\leq\|\hat{n}\|\leq\lambda/m}\frac{1}{\|\hat{n}\|^{\alpha+1}}$ is of order $\ml{O}(\log(\lambda/m)).$
 \item $\alpha<d-1$: the sum $\sum_{1\leq\|\hat{n}\|\leq\lambda/m}\frac{1}{\|\hat{n}\|^{\alpha+1}}$ is of order $(\lambda/m)^{d-1-\alpha}$.
\end{itemize}
In any case, this gives the upper bound
$$\sum_{1\leq \|n\|\leq\lambda}\frac{1}{\|\hat{n}\|\|n\|^{\alpha}}\lesssim (\log\lambda)^2\lambda^{\max(d-1-\alpha,0)},$$
where $\lesssim$ means that the upper bound involves a constant which is independent of $\lambda$. 
\begin{rema}\label{r:log-term} For $d\geq 3$ and $\alpha<d-1$, the $(\log \lambda)^2$ factor can be removed. For $d\geq 3$ and $\alpha=d-1$, we have a term of order $\log\lambda$ instead of $(\log\lambda)^2$. 
Finally, for $\alpha>d-1$, the $(\log \lambda)^2$ factor can be removed again.
 
\end{rema}

Combined with~\eqref{e:step-arithmetic}, we finally get
$$\frac{1}{N(\lambda)}\sum_{j:\lambda_j\leq \lambda}\left|\int_{\IT^d}a|\psi_j|^2d\mu-\int_{\IT^d}ad\mu\right|^2
=\ml{O}_a\left((\log\lambda)^2\lambda^{\max(d-2-\alpha,-1)}\right),$$
 which concludes the proof of Theorem~\ref{t:QE-measure}.

\begin{rema}\label{r:LR-hypersurface} Regarding the results in Theorem~\ref{t:QE-measure}, we note that this upper bound 
is better than what we would a priori get via the upper bounds from~\cite{LeRu17} on the moments of order $1$. In fact, if we use $1$-moments, then the upper bound we would need to estimate is
$$\sum_{1\leq \|n\|\leq\lambda}\frac{1}{\|\hat{n}\|\|n\|^{\frac{\alpha}{2}}},$$
which would lead to stronger constraints on the value of $\alpha$. By estimating the moments of order $2$, we shall typically impose 
$\alpha>d-2$ while moments of order $1$ would require to fix $\alpha>2(d-2)$ which is a worst constraint when $d\geq 3$. For instance, 
if we we keep in mind that our main application follows from Littman's 
Theorem, we need to allow $\alpha=d-1$, and this would not be authorized by the assumption $\alpha>2(d-2)$ in dimensions $d\geq 3.$
 
\end{rema}

\subsection{Average of eigenfunctions over hypersurfaces}\label{ss:average}

In this paragraph, we make a few well-known remarks on the related question of the average of toral eigenfunctions over an hypersurface $\Sigma$ 
endowed with the induced Riemannian measure $\sigma$. 
More precisely, given a solution $\psi_{\lambda}$ of~\eqref{e:eigenvalue}, we want to estimate
$$\int_{\Sigma}\psi_{\lambda}(x)d\sigma(x).$$
Using Zelditch's Kuznecov trace formula~\cite[Eq.~3.4]{Ze92}, one knows that for a density one subsequence of 
eigenfunctions of a  given orthonormal basis, this quantity decays as 
$\lambda^{\frac{1-d}{2}}R(\lambda)$ for any choice of 
$R(\lambda)\rightarrow +\infty$. In addition, the trace formula of \cite{Ze92} also shows that for all eigenfunctions this quantity is uniformly bounded. 
In the case of the rational torus $\IT^d$, we would like to recall how one can easily say a little bit more provided 
some assumptions are made on the curvature of $\Sigma$. To see this, we write
$$\psi_{\lambda}=\sum_{k:\|k\|=\lambda}\hat{\psi}_{\lambda}(k)\mathbf{e}_k,$$
and we find using the Cauchy-Schwarz inequality
$$\left|\int_{\Sigma}\psi_{\lambda}(x)d\sigma(x)\right|=\left|\sum_{k:\|k\|=\lambda}\hat{\psi}_{\lambda}(k)\hat{\sigma}(-k)\right|\leq\left(
\sum_{k:\|k\|=\lambda}|\hat{\sigma}(k)|^2\right)^{\frac{1}{2}}.$$
Suppose now that, for every $x$ in $\Sigma$, all the principal curvatures at $x$ do not vanish, so that we can apply Littman's Theorem again. This 
yields
$$\left|\int_{\Sigma}\psi_{\lambda}(x)d\sigma(x)\right|\leq\left(
\sum_{k:\|k\|=\lambda}\frac{1}{\|k\|^{d-1}}\right)^{\frac{1}{2}}\leq \frac{1}{\lambda^{\frac{d-1}{2}}}\left|\{k:\|k\|=\lambda\}\right|^{\frac{1}{2}}.$$
Hence, everything boils down to an estimate on the number $r_d(\lambda)$ of lattice points on a circle of radius $\lambda$ which is a standard problem in number 
theory. We briefly recall classical results from the literature. First of all, for $d\geq 5$, it follows 
from~\cite[p.~155]{Gr85} that $r_d(\lambda)$ is of order $\lambda^{d-2}.$ If $d=4$, we can deduce from~\cite[Th.~13.12]{Ap76} 
and~\cite[Th.~4, p.~30]{Gr85} that $r_4(\lambda)$ is bounded by $\lambda^{2+\delta}$ for every $\delta>0$. 
When $d=3$, we have that $r_3(\lambda)$ is bounded by $\lambda^{1+\delta}$ for any 
positive $\delta$ as a consequence of~\cite[Th.~13.12]{Ap76} and~\cite[Eq.~(4.10),p.~55]{Gr85}. For $d=2$, it follows from~\cite[Th.~13.12]{Ap76} 
and~\cite[p.~15,Th.~3]{Gr85} that $r_2(\lambda)$ is of order $\lambda^{\delta}$ for every $\delta>0$. Gathering these bounds, we get:
\begin{prop} Suppose that the assumptions of Theorem~\ref{t:maintheo-hypersurface} are satisfied. 
Then, for any solution $\psi_{\lambda}$ of~\eqref{e:eigenvalue}, one 
has, for every $\delta>0$,
$$\int_{\Sigma}\psi_{\lambda}(x)d\sigma(x)=\ml{O}(\lambda^{-\frac{1}{2}+\delta}),$$
with $\delta$ that can be taken to be $0$ when $d\geq 5$.
\end{prop}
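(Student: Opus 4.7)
The plan is to combine three ingredients that have already been set up in the preceding paragraphs of Section~3.5. First, since $\psi_\lambda$ is a Laplace eigenfunction on $\IT^d$ with eigenvalue $\lambda^2$, its Fourier support lies on the sphere $\{\|k\|=\lambda\}$, so I can write
$$\psi_\lambda=\sum_{k:\|k\|=\lambda}\hat{\psi}_\lambda(k)\,\mathbf{e}_k,\qquad \sum_{k:\|k\|=\lambda}|\hat{\psi}_\lambda(k)|^2=\|\psi_\lambda\|_{L^2}^2=1.$$
Integrating term by term yields
$$\int_\Sigma\psi_\lambda\,d\sigma=\sum_{k:\|k\|=\lambda}\hat{\psi}_\lambda(k)\,\hat{\sigma}(-k),$$
and Cauchy--Schwarz reduces the task to controlling $\sum_{k:\|k\|=\lambda}|\hat{\sigma}(k)|^2$.

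Second, I would invoke Littman's theorem: under the hypothesis that every principal curvature of $\Sigma$ is nonzero at every point (the standing assumption of Theorem~\ref{t:maintheo-hypersurface}), one has the stationary-phase bound $|\hat{\sigma}(k)|^2=\ml{O}(\|k\|^{-(d-1)})$ uniformly in the direction of $k$. This turns the previous estimate into
$$\left|\int_\Sigma\psi_\lambda\,d\sigma\right|\ \lesssim\ \lambda^{-\frac{d-1}{2}}\,r_d(\lambda)^{\frac12},$$
where $r_d(\lambda)$ counts lattice points on the sphere of radius $\lambda$ in $\IZ^d$.

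Third, I would plug in the classical number-theoretic estimates for $r_d(\lambda)$ that are already cited in the text: $r_2(\lambda)=\ml{O}(\lambda^{\delta})$ and $r_3(\lambda)=\ml{O}(\lambda^{1+\delta})$, $r_4(\lambda)=\ml{O}(\lambda^{2+\delta})$ for every $\delta>0$, while for $d\geq 5$ one has the sharp asymptotic $r_d(\lambda)\asymp \lambda^{d-2}$. In every case the product $\lambda^{-(d-1)/2}\,r_d(\lambda)^{1/2}$ collapses to $\lambda^{-1/2+\delta}$, with $\delta$ removable as soon as $d\geq 5$, which is exactly the claimed bound.

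There is no genuinely difficult step here: the Fourier support property is specific to the torus, Littman's theorem supplies the decay of $\hat{\sigma}$ in the required uniform way (with an implicit constant depending only on $\Sigma$), and the divisor-type lattice point counts are standard. The only thing to keep track of carefully is the uniformity of Littman's estimate in the direction of $k$, since the decay must survive summation over all lattice points on the sphere; this however follows from the usual statement of the theorem under the uniform nonvanishing curvature hypothesis.
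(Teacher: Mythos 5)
Your proof is correct and follows exactly the same route as the paper: Fourier decomposition of $\psi_\lambda$ over the sphere $\{\|k\|=\lambda\}$, Cauchy--Schwarz using $\|\psi_\lambda\|_{L^2}=1$, Littman's decay bound $|\hat{\sigma}(k)|^2=\ml{O}(\|k\|^{-(d-1)})$, and the standard lattice-point counts $r_d(\lambda)$ in each dimension. There is nothing to add.
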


We emphasize that the assumption on $\Sigma$ is sharp in the sense that, if we consider a flat subtorus embedded in $\IT^d$ (e.g. $\IR^{d-1}/(2\pi\IZ)^{d-1}$), then it is easy to 
construct a subsequence of eigenfunctions verifying $\int_{\Sigma}\psi_{\lambda}(x)d\sigma(x)=1$ (take $\psi_k(x)=\exp(i kx_d)$).

\section{The case $d=2$}
In this last section, we focus on the $2$-dimensional case. 
More precisely, we draw a 
couple of simple consequences related to our results from an arithmetic Lemma due to Bourgain and Rudnick~\cite{BoRu11}.

It is known that the number of integers in $\{1,\ldots,N\}$ that can be written as the sum of two squares is of order $N/\sqrt{\log N}$ as $N\rightarrow+\infty$. Among 
these numbers, we know that most of them have the property of having well separated solutions~\cite[Lemma~5]{BoRu11}:
\begin{lemm}\label{l:arithmetic} Let $\delta>0$. Then, for all but $\ml{O}(N^{1-\frac{\delta}{3}})$ integers $E\leq N$, one has
 $$\min_{k\neq l\in\IZ^2:\|k\|^2=\|l\|^2=E}|k-l|> E^{\frac{1-\delta}{2}}.$$
\end{lemm}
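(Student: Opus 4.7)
The plan is a direct counting argument. Fix $\delta > 0$ and call an integer $E \leq N$ \emph{bad} if there exist $k \neq l$ in $\IZ^2$ with $\|k\|^2 = \|l\|^2 = E$ and $|k-l| \leq E^{(1-\delta)/2}$. For each bad $E$ I select one such pair and set $m := k - l \in \IZ^2 \setminus \{0\}$; note that $\|m\| \leq N^{(1-\delta)/2}$. Expanding $\|l + m\|^2 = \|l\|^2$ yields $2\langle m, l\rangle + \|m\|^2 = 0$, so $l$ lies on the affine line
\[
L_m \;:=\; \{x \in \IR^2 : 2\langle m, x\rangle = -\|m\|^2\}.
\]
Since $E = \|l\|^2$ is recovered from $l$, the assignment $E \mapsto (m, l)$ is injective. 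It therefore suffices to bound from above the number of pairs $(m, l) \in (\IZ^2 \setminus \{0\}) \times \IZ^2$ with $\|m\| \leq N^{(1-\delta)/2}$, $\|l\| \leq \sqrt N$, and $l \in L_m$.

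Next I estimate, for each fixed $m \neq 0$, the number of lattice points of $L_m$ inside the disk of radius $\sqrt N$. Write $m = d\,\hat m$ with $d \geq 1$ and $\hat m \in \IZ^2$ primitive. When $L_m \cap \IZ^2 \neq \emptyset$, the lattice points on $L_m$ form an arithmetic progression whose step vector is a primitive lattice vector orthogonal to $\hat m$, and thus of norm $\|\hat m\|$. Hence the number of such $l$ with $\|l\| \leq \sqrt N$ is $O(\sqrt N / \|\hat m\| + 1)$. Summing,
\[
|\{\text{bad } E\}| \;\lesssim\; \sum_{d \geq 1} \sum_{\substack{\hat m \in \IZ^2 \text{ primitive} \\ \|\hat m\| \leq N^{(1-\delta)/2}/d}} \left( \frac{\sqrt N}{\|\hat m\|} + 1 \right).
\]
Using the elementary estimates $\sum_{\hat m \text{ prim},\, \|\hat m\| \leq R} 1/\|\hat m\| = O(R)$ and $\sum_{\hat m \text{ prim},\, \|\hat m\| \leq R} 1 = O(R^2)$ applied with $R = N^{(1-\delta)/2}/d$, and then summing $\sum_d 1/d = O(\log N)$ and $\sum_d 1/d^2 = O(1)$ over $d$ up to $N^{(1-\delta)/2}$, one obtains a total of order $N^{1 - \delta/2}\log N$, comfortably $O(N^{1 - \delta/3})$ for large $N$.

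The main obstacle is the bookkeeping in the second step: one must correctly identify the lattice spacing on $L_m$ as $\|\hat m\|$ rather than $\|m\|$, otherwise non-primitive $m$ would be undercounted. One also has to recall that $L_m$ contains integer points only when $\|m\|^2$ is even (equivalently $d\|\hat m\|^2$ is even), but this parity restriction only reduces the count and hence is harmless for the desired upper bound. Once this is in place, the argument reduces to a standard geometric summation of the same flavor as the lattice-point counting appearing in \eqref{e:LR-long} and in the proof of Theorem~\ref{t:QE-measure}.
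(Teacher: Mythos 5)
The paper itself does not prove this lemma; it simply cites it as~\cite[Lemma~5]{BoRu11}, so there is no ``paper's proof'' to compare against. Evaluated on its own terms, your argument is correct and self-contained.

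To briefly audit it: the injection $E \mapsto (m,l)$ is sound since $E = \|l\|^2$ is recovered from $l$, and the reduction to counting pairs is valid. The key geometric facts check out: the line $L_m$ lies at distance $\|m\|/2 \leq N^{(1-\delta)/2} \leq \sqrt N$ from the origin, so it does meet the disk; if $L_m \cap \IZ^2 \neq \emptyset$, the integer points on it form a one-dimensional progression with step the primitive vector orthogonal to $\hat m$, whose norm is indeed $\|\hat m\|$ (if $\hat m = (a,b)$ with $\gcd(a,b)=1$, the step is $(-b,a)$); and $L_m$ contains integer points precisely when $\|m\|^2$ is even, which you correctly note only reduces the count. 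The dyadic bookkeeping $m = d\hat m$, together with the elementary bounds $\sum_{\|\hat m\| \leq R} \|\hat m\|^{-1} = O(R)$ and $\sum_{\|\hat m\| \leq R} 1 = O(R^2)$, gives a total of $O\left(N^{1-\delta/2}\log N\right)$, which is $O(N^{1-\delta/3})$ since $\log N = O(N^{\delta/6})$; in fact your bound is slightly stronger than what the lemma asserts. One could also observe that the lemma is vacuous for $\delta \geq 1$ (then $E^{(1-\delta)/2} \leq 1$ and either no nonzero $m$ qualifies, or $\|m\| = 1$ forces $\|m\|^2$ odd), so one may assume $0 < \delta < 1$, but this does not affect the argument. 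The approach — parametrizing bad $E$ by the difference vector and counting lattice points on the resulting orthogonality line — is the natural one and is in the same spirit as the lattice-counting elsewhere in the paper, e.g.\ around~\eqref{e:LR-long}.
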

In other words, for almost all eigenvalues $\lambda^2\leq \Lambda^2$, the associated lattice points are well separated. In the following, we denote by 
$\sigma(\Delta)$ the set of integers which are a sum of two squares, equivalently the set of Laplace eigenvalues. A nice corollary of this Lemma is the following:
\begin{coro}\label{c:generic-QUE} Let $d=2$. Then, there exists a density $1$ subset $S$ of $\sigma(\Delta)$ such that, 
for every sequence of $L^2$ normalized solutions $\psi_{\lambda}$ of
$$\Delta\psi_{\lambda}=-\lambda^2\psi_{\lambda},$$
one has,
$$\forall a\in\ml{C}^0(\IT^2),\ \lim_{\lambda\rightarrow+\infty,\lambda^2\in S}\int_{\IT^2}a(x)|\psi_{\lambda}(x)|^2dx=\int_{\IT^2}a(x)dx.$$
\end{coro}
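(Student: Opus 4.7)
The plan is to combine Lemma~\ref{l:arithmetic} with Parseval's identity applied on each individual eigenspace. I would first fix some $\delta\in(0,1)$ and let $S\subset\sigma(\Delta)$ be the complement of the exceptional set of size $\ml{O}(N^{1-\delta/3})$ provided by Lemma~\ref{l:arithmetic}. Since the number of $E\leq N$ that are representable as a sum of two squares is of order $N/\sqrt{\log N}$, this $S$ has density $1$ in $\sigma(\Delta)$. By construction, for every $\lambda^2\in S$, every pair of distinct lattice points $k\neq l\in\IZ^2$ with $\|k\|=\|l\|=\lambda$ satisfies $|k-l|>\lambda^{1-\delta}$.

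Next I would pass to Fourier series on the eigenspace at height $\lambda$. Writing any $L^2$-normalized $\psi_\lambda=\sum_{\|k\|=\lambda}c_k\mathbf{e}_k$ with $\sum_k|c_k|^2=1$, one has
\[
|\psi_\lambda|^2=\sum_{\substack{k,l\\ \|k\|=\|l\|=\lambda}}c_k\overline{c_l}\,\mathbf{e}_{k-l},
\]
so that for $a\in\ml{C}^\infty(\IT^2)$ Parseval gives
\[
\int_{\IT^2}a|\psi_\lambda|^2dx-\int_{\IT^2}a\,dx=\sum_{n\neq 0}\hat{a}_n\sum_{\substack{k:\|k\|=\lambda\\ \|k-n\|=\lambda}}c_k\overline{c_{k-n}}.
\]
For $\lambda^2\in S$ the inner sum is empty whenever $0<\|n\|\leq\lambda^{1-\delta}$, while for $\|n\|>\lambda^{1-\delta}$ the Cauchy--Schwarz inequality gives $|\sum_k c_k\overline{c_{k-n}}|\leq 1$. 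Since $|\hat{a}_n|$ decays faster than any power of $\|n\|$, the remaining tail sum tends to $0$ as $\lambda^2\to\infty$ in $S$, which proves the stated limit for every smooth $a$ and every choice of unit vector $(c_k)$ supported on $\{\|k\|=\lambda\}$.

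Finally, I would extend from smooth to continuous $a$ by approximation: given $a\in\ml{C}^0(\IT^2)$ and $\eps>0$, pick $a_\eps\in\ml{C}^\infty(\IT^2)$ with $\|a-a_\eps\|_{\ml{C}^0}<\eps$; since $\int|\psi_\lambda|^2dx=1$, replacing $a$ by $a_\eps$ on both sides of the asserted limit introduces an error bounded by $2\eps$, which combined with the previous step yields the claim along $S$. The main, though quite mild, obstacle is the correct use of Lemma~\ref{l:arithmetic}: one needs the arithmetic separation to exclude \emph{all} nonzero lattice differences of small norm, so that the entire low-frequency part of the Fourier expansion of $|\psi_\lambda|^2$ collapses to a constant, rather than just generically most of them. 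Lemma~\ref{l:arithmetic} is tailored precisely for this, after which the argument is soft harmonic analysis.
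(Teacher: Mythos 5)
Your proof is correct and takes essentially the same route as the paper: expand $\psi_\lambda$ in the eigenspace Fourier basis, use Lemma~\ref{l:arithmetic} to observe that for $\lambda^2\in S$ all nonzero lattice differences $k-l$ with $\|k\|=\|l\|=\lambda$ have norm $>\lambda^{1-\delta}$, and conclude that the nonconstant Fourier modes of $|\psi_\lambda|^2$ only occur at high frequency. The paper phrases this by reducing to $a=\mathbf{e}_p$ (so the off-diagonal terms vanish outright once $\lambda^{1-\delta}>\|p\|$), while you keep a general smooth $a$ and bound the high-frequency tail by Cauchy--Schwarz plus rapid decay of $\hat a_n$; these are the same argument in slightly different clothing, and both rely on the same density reduction from $\mathcal{C}^0$ to smooth (resp.\ character) test functions.
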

In other words, for a generic sequence of eigenvalues, \emph{all} the Laplace eigenfunctions are equidistributed on the configuration space 
as $\lambda\rightarrow +\infty$.
\begin{proof} By a density argument, it is sufficient to prove this for $a=\mathbf{e}_p$ for every $p$ in $\IZ^2$. We write
$$\psi_{\lambda}=\sum_{k:\|k\|=\lambda}\hat{c}_k\mathbf{e}_k.$$
Then, one has
$$\int_{\IT^2}\mathbf{e}_p(x)|\psi_{\lambda}(x)|^2dx=\sum_{k,l:\|k\|=\|l\|=\lambda}\hat{c}_k\overline{\hat{c}_l}\int_{\IT^2}\mathbf{e}_{p+k-l}(x)dx.$$
From Lemma~\ref{l:arithmetic}, we know that the off-diagonal terms cancel for $\lambda$ large enough. Hence, for $\lambda$ large enough (in a way that depends on $p$ in $\IZ^2$), one has
$$\int_{\IT^2}\mathbf{e}_p(x)|\psi_{\lambda}(x)|^2dx=\int_{\IT^2}\mathbf{e}_p(x)dx,$$
which concludes the proof of the corollary.
\end{proof}

\begin{rema} In a recent article, Granville and Wigman \cite{GrWi} obtained improvements on the Bourgain-Rudnick lemma and used it to get some results on \textit{small scale equidistribution} property of toral eigenfunctions in dimension $2$. See \cite{HeRi17, LeRu17, Sa17} for earlier work on this topic.
\end{rema}

Once we have noticed this simple corollary of Bourgain-Rudnick's Lemma, the next question to ask is whether this remains true for equidistribution 
along hypersurfaces. This is in fact the case thanks to Littman's Theorem and to the upper bound from~\eqref{e:bourgain-rudnick}:
\begin{prop}\label{p:generic-QUE-curve} Let $\Sigma\subset\IT^2$ be a smooth closed embedded and oriented curve with nonvanishing curvature. 
Then, there exists a density $1$ subset $S$ of $\sigma(\Delta)$ such that, for every sequence of $L^2$ normalized solutions $\psi_{\lambda}$ of
$$\Delta\psi_{\lambda}=-\lambda^2\psi_{\lambda},$$
one has, as $\lambda\rightarrow+\infty$,
$$\forall a\in\ml{C}^0(\IT^2),\ \lim_{\lambda\rightarrow+\infty,\lambda^2\in S}\int_{\Sigma}a(x)|\psi_{\lambda}(x)|^2d\sigma(x)=\int_{\Sigma}a(x)d\sigma(x)+o(1).$$
\end{prop}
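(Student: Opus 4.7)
The plan is to reduce the convergence problem to testing against exponentials $\mathbf{e}_p$ and then to combine Littman's Fourier decay for the hypersurface measure with the arithmetic separation of lattice points supplied by Lemma~\ref{l:arithmetic}. A pleasant feature of the argument will be that the density one set of ``good'' eigenvalues is the one directly produced by Lemma~\ref{l:arithmetic}; it is independent of both the test function $a$ and of the particular eigenfunction $\psi_\lambda$ of eigenvalue $\lambda^2$.

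First, I would note that by the Bourgain--Rudnick upper bound~\eqref{e:bourgain-rudnick}, the linear functionals $a\mapsto \int_\Sigma a\,|\psi_\lambda|^2 d\sigma$ are uniformly continuous on $\ml{C}^0(\IT^2)$ with operator norm bounded by $C^2$ independently of $\lambda$. Together with the continuity of $a\mapsto\int_\Sigma a\, d\sigma$ and the density of trigonometric polynomials in $\ml{C}^0(\IT^2)$, this reduces the statement to showing, for every fixed $p\in\IZ^2$, that
$$\int_\Sigma \mathbf{e}_p\,|\psi_\lambda|^2 d\sigma\longrightarrow \int_\Sigma \mathbf{e}_p\, d\sigma\quad\text{as}\ \lambda\to+\infty,\ \lambda^2\in S,$$
for every $L^2$-normalized eigenfunction $\psi_\lambda$ of eigenvalue $\lambda^2$.

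Next, I would expand $\psi_\lambda=\sum_{\|k\|=\lambda}\hat{c}_k\mathbf{e}_k$ and compute
$$\int_\Sigma \mathbf{e}_p\,|\psi_\lambda|^2 d\sigma=\sum_{k,l:\|k\|=\|l\|=\lambda}\hat{c}_k\overline{\hat{c}_l}\,\widehat{\sigma}(l-k-p).$$
The diagonal contribution $k=l$ equals $\widehat{\sigma}(-p)\sum_k|\hat{c}_k|^2=\int_\Sigma \mathbf{e}_p\, d\sigma$, which is exactly the desired limit, so everything reduces to controlling the off-diagonal sum. Here I would fix some $\delta\in(0,1)$ and let $S$ be the density one subset of $\sigma(\Delta)$ supplied by Lemma~\ref{l:arithmetic}. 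For $\lambda^2\in S$ and $k\neq l$ on the circle of radius $\lambda$, one has $|k-l|>\lambda^{1-\delta}$, hence for any fixed $p$ and all $\lambda$ large enough (depending on $p$), $|l-k-p|\gtrsim\lambda^{1-\delta}$ uniformly in such $k,l$.

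Finally, since $\Sigma$ has nonvanishing curvature, Littman's theorem yields $|\widehat{\sigma}(n)|\lesssim\|n\|^{-1/2}$, so $|\widehat{\sigma}(l-k-p)|\lesssim\lambda^{-(1-\delta)/2}$ uniformly over the off-diagonal pairs. Using Cauchy--Schwarz to bound $\sum_k|\hat{c}_k|\leq r_2(\lambda)^{1/2}$ and invoking the divisor-type bound $r_2(\lambda)=\ml{O}(\lambda^\eps)$ recalled in Section~\ref{ss:average}, the off-diagonal sum is at most $C\,r_2(\lambda)\,\lambda^{-(1-\delta)/2}=\ml{O}(\lambda^{-(1-\delta)/2+\eps})$, which tends to $0$ upon choosing $\delta$ and $\eps$ small. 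The main potential obstacle would be that the separation threshold in Lemma~\ref{l:arithmetic} might be too weak to dominate the cardinality $r_2(\lambda)$ of the eigenspace; the argument only barely works because in dimension $2$ the divisor bound gives $r_2(\lambda)=\ml{O}(\lambda^\eps)$, and the Fourier decay exponent $1/2$ afforded by Littman is strictly positive. In higher dimensions either the separation would be too weak or the eigenspace dimension too large for this direct approach to succeed.
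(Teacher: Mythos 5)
Your proof is correct and follows essentially the same route as the paper's: reduce by density (via the Bourgain--Rudnick uniform $L^2(\Sigma)$ bound) to the case $a=\mathbf{e}_p$, expand $\psi_\lambda$ in Fourier modes on the circle $\|k\|=\lambda$, isolate the diagonal which gives the target, use Lemma~\ref{l:arithmetic} to force $|l-k-p|\gtrsim\lambda^{1-\delta}$ for off-diagonal $k\neq l$, invoke Littman's decay $|\widehat\sigma(n)|\lesssim\|n\|^{-1/2}$, and control the sum $\sum_{k\neq l}|\hat c_k||\hat c_l|\leq r_2(\lambda)$ via Cauchy--Schwarz together with the divisor bound $r_2(\lambda)=\ml O(\lambda^\eps)$. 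You also correctly flag why the argument is delicate and special to $d=2$ (the separation exponent from Lemma~\ref{l:arithmetic} only barely wins against the eigenspace cardinality there), a remark the paper does not spell out but which is accurate.
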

\begin{proof} Using Bourgain-Rudnick's upper bound~\eqref{e:bourgain-rudnick}, we can make use of a density argument and it 
is again sufficient to prove this result when $a=\mathbf{e}_p$ for every fixed $p$ in $\IZ^2$. Again, we write
 $$\psi_{\lambda}=\sum_{k:\|k\|=\lambda}\hat{c}_k\mathbf{e}_k,$$
 and we find that
 $$\int_{\Sigma}\mathbf{e}_p(x)|\psi_{\lambda}(x)|^2d\sigma(x)=\sum_{k,l:\|k\|=\|l\|=\lambda}\hat{c}_k\overline{\hat{c}_l}\int_{\Sigma}\mathbf{e}_{p+k-l}(x)d\sigma(x).$$
Applying Littman's Theorem~\cite{Li63} and Lemma~\ref{l:arithmetic}, one finds 
$$\int_{\Sigma}\mathbf{e}_p(x)|\psi_{\lambda}(x)|^2d\sigma(x)=\int_{\Sigma}\mathbf{e}_pd\sigma+\ml{O}\left(\lambda^{-\frac{1-\delta}{2}}\right)
\sum_{k\neq l:\|k\|=\|l\|=\lambda}|\hat{c}_k||\hat{c}_l|
.$$
For every $\delta>0$, recall from paragraph~\ref{ss:average} that the number of lattice points on the circle $\lambda\mathbb{S}^1$ is of order $\ml{O}(\lambda^{\delta})$. 
Hence, thanks to the Cauchy-Schwarz inequality, the contribution of the nondiagonal terms will be of order $\ml{O}(\lambda^{\delta})$. This implies that
$$\int_{\Sigma}\mathbf{e}_p(x)|\psi_{\lambda}(x)|^2d\sigma(x)=\int_{\Sigma}\mathbf{e}_{p}(x)d\sigma(x)+o(1).$$
\end{proof}

\section*{Acknowledgments}

We are grateful to Suresh Eswarathasan for discussions related to his recent work~\cite{EP17} and to Ze\'ev Rudnick for his comments related to the first version of the manuscript. 
The second author is partially supported by the Agence Nationale de la Recherche through the Labex CEMPI (ANR-11-LABX-0007-01) and 
the ANR project GeRaSic (ANR-13-BS01-0007-01).

\end{document}